\newcommand{\enm}[1]{\ensuremath{#1}}
\newcommand{\CC}{\enm{\mathbb{C}}}
\newcommand{\NN}{\enm{\mathbb{N}}}
\newcommand{\PP}{\enm{\mathbb{P}}}
\newcommand{\Aa}{\enm{\mathcal{A}}}
\newcommand{\Ii}{\enm{\mathcal{I}}}
\newcommand{\Jj}{\enm{\mathcal{J}}}
\newcommand{\Oo}{\enm{\mathcal{O}}}
\newcommand{\Ss}{\enm{\mathcal{S}}}
\newcommand{\red}{\operatorname{red}}
\newcommand{\refthm}[1]{{Theorem \ref{#1}}}
\newcommand{\refeqn}[1]{{(\ref{#1})}}
\newcommand{\refrem}[1]{{Remark \ref{#1}}}
\newcommand{\refex}[1]{{Example \ref{#1}}}
\newcommand{\refcor}[1]{{Corollary \ref{#1}}}
\newcommand{\refprop}[1]{{Proposition \ref{#1}}}
\newcommand{\reflem}[1]{{Lemma \ref{#1}}}
\newcommand{\spa}[1]{\langle{#1}\rangle}
\newcommand{\Res}{\operatorname{Res}}
\newcommand{\sym}{\operatorname{Sym}}
\newtheorem{definition}{Definition}[section]
\newtheorem{proposition}[definition]{Proposition}
\newtheorem{theorem}[definition]{Theorem}
\newtheorem{lemma}[definition]{Lemma}
\newtheorem{corollary}[definition]{Corollary}
\newtheorem{example}[definition]{Example}
\newtheorem{remark}[definition]{Remark}
\DeclareMathOperator{\SF}{{\rm SF}}
\begin{document}

\title[Bounds on the rank]
{Bounds on the {tensor rank}}
\date{}


\author[E. Ballico, A. Bernardi, L. Chiantini, E. Guardo]{Edoardo Ballico, Alessandra Bernardi, Luca Chiantini, Elena Guardo}
\address[E. Ballico, A. Bernardi]{Dipartimento di Matematica, Universit\`a di Trento}
\email{edoardo.ballico@unitn.it, alessandra.bernardi@unitn.it}
\address[L. Chiantini]{Dipartimento di Ingegneria dell'Informazione e Scienze Matematiche
\\ Universit\`a di Siena} \email{luca.chiantini@unisi.it}
\address[E. Guardo]{Dipartimento di Matematica e Informatica, Universit\`a di Catania} \email{guardo@dmi.unict.it}

\maketitle

\begin{abstract} We give a sufficient criterion for a lower bound of the cactus rank of a tensor. 
Then we refine that criterion in order to be able to give an explicit sufficient condition for a non-redundant 
decomposition of a tensor to be minimal and unique.
\end{abstract}

\section{Introduction}

{The study of  minimal decompositions of a given tensor $T$ as a linear combination of rank one tensors is a hot}
topic in many areas, ranging {from} pure algebraic geometry to applications {to} signal processing, big data analysis, quantum information...
\medskip

{Vectors} $v_{j,i}\in \mathbb{C}^{n_j+1}$, $j=1, \ldots, k$, $i=1, \ldots, r$, such that 
\begin{equation}\label{decomposition}
T=\sum_{i=1}^r \lambda_i v_{1,i}\otimes \cdots \otimes v_{k,i}
\end{equation}
for some $\lambda_i\in \mathbb{C}\setminus \{0\}$
{ determine a {\it decomposition} of $T$. We will say that the decomposition is 
{\it non-redundant} (cf. Definition \ref{Definition:NonRedundant}) if }  we
cannot extract any proper subset {of} $\{v_{1,i}\otimes
\cdots \otimes v_{k,i}\}_{i=1, \ldots , r}$ { which generates $T$}. 

{Since we will use geometric arguments through the paper, we use a geometric notation. Thus
 we identify (up to scalar multiplication) a tensor $T$
with a point in the projective space $\PP(\mathbb{C}^{n_1+1}\otimes\dots\otimes\mathbb{C}^{n_k+1})$
and a decomposition of $T$ with a finite subset $S$ of the Segre embedding of  the} abstract product 
$\PP(\mathbb{C}^{n_1+1})\times\dots\times\PP(\mathbb{C}^{n_k+1})=$ $\PP^{n_1}\times\dots\times\PP^{n_k}$,
{such that $T$ belongs to the linear span of $S$}.
\medskip

Cleary, having a non-redundant decomposition of a given tensor $T$
does not imply that such a decomposition is {minimal, i.e. it has a minimal} 
number of addenda ({so that $r$ is} the rank
$\mathrm{rk}(T)$ of $T$, cf. Definition \ref{Definition:Ranks}).

In this paper we give a criterion that certifies if a
non-redundant decomposition of a general tensor $T$ { is} also minimal, {thus that it computes the rank of $T$ (cf. \refthm{add7}}). 
Moreover, under certain conditions, we can also show
that {a decomposition} is the unique minimal decomposition of {$T$} (cf. Theorem \ref{ee4}).

These two facts rely on {our main result} Theorem \ref{add7},
where we give a criterion to find a lower bound for the cactus rank (cf. Definition \ref{Definition:Ranks}).
The idea is {geometrically} quite simple: Assume one has a non-redundant decomposition {$S$} of a tensor 
$T\in\PP(\mathbb{C}^{n_1+1}\otimes \cdots \otimes \mathbb{C}^{n_k+1}) $, 
then one can flatten the product $\PP^{n_1}\times\dots\times\PP^{n_k}$ 
in a partition of two factors and study {the geometry of the two projections of $S$,
to get the result.} 

{One} can easily compare our {result} with the Kruskal's {result on the identifiability
of tensors (\cite{Kru}), which implies a criterion for the minimality of a decomposition. It turns out } 
that  our criterion {is geometrically simpler, and it} applies in a wider range of {numerical} cases.

{Theorem \ref{add7} has the} consequence that if a non-redundant decomposition projects onto two
linearly independents subset in the flattening, then it is also a minimal one (cf. Example \ref{Strassen}).
This {can} be  considered as a step towards the celebrated
Strassen's Conjecture on the rank of a block tensor (cf. \cite{Stra}).

{We show also that Theorem \ref{add7} provides new evidences towards the
Comon's Conjecture, i.e. the equality between the rank and the symmetric rank of a symmetric tensor $T$. 
In a wide numerical range, if we know the existence of a symmetric decomposition of $T$ with
sufficiently general geometric properties, then the conjecture holds for $T$
(cf. \refcor{comon}). For instance, the Corollary applies for general tensors in $\PP(\sym^6(\CC^3))$ and
$\PP(\sym^8(\CC^3))$, i.e. for general forms of degree $6$ and $8$ in three variables.}

Another {consequence} of Theorem \ref{add7} is {described} in Theorem \ref{add11}. 
{Given a minimal decomposition with $r$ addenda,
then for} any integer $r'$ such that $r\leq r' <-1+\Pi (n_i+1)$, it is always possible to find a non-redundant 
{decomposition with $r'$ addenda. We notice that this happens to be false for symmetric decompositions
of a} symmetric tensor{, even in the case  of} $S^d(\mathbb{C}^2)$ with $d\geq 4$ 
(cf. Sylvester Theorem in e.g. \cite{cs,bgi}).
\medskip

In Section \ref{Section:Identifiability} we focus on the identifiability of a minimal decomposition, 
meaning the uniqueness of a decomposition of a tensor $T$ with exactly $\mathrm{rk}(T)$ addenda. 
The main result of this section is Theorem \ref{ee4} (that is again a consequence of Theorem \ref{add7}), 
where we {point out}  a sufficient condition for a non-redundant decomposition to be minimal and unique. 
{Again we} compare our result with Kruskal's bound (\cite{Kru}). Since Kruskal's bound is sharp (\cite{sb}, \cite{Der}) 
and since our {geometric assumptions are weaker, and then easier to verify, than Kruskal's ones, 
we cannot hope to} produce
applications outside  Kruskal's numerical range. There are few cases in which {our}
the numerical range of application matches with Kruskal's range.
One of them e.g. is given by tensors of type $3\times 2 \times 2 \times 2$.

\section{Notation and preliminaries}

For a subscheme $Z\subset\PP^m$, we indicate with $\spa{Z}$
the linear span of $Z$ and with $\deg(Z)$ its length (when it is finite).
If $Z$ is finite and reduced, we indicate with $\sharp Z$ the cardinality of $Z$. 
\medskip

For any product of projective spaces $\PP^{n_1}\times\dots\times\PP^{n_k}$ call $\nu$ the Segre map
$$\nu :\PP^{n_1}\times\dots\times\PP^{n_k}\to \PP^M,\qquad M=-1+\Pi (n_i+1).$$

In order to have a more compact notation we will always write $$Y:=\PP^{n_1}\times\dots\times\PP^{n_k}$$ for the abstract product,
and $$X:=\nu (Y)\subset \mathbb{P}^M$$ for the Segre variety.

\medskip

For any $i=1,\dots,k$ call $\pi_i$ the projection of  $\PP^{n_1}\times\dots\times\PP^{n_k}$ to the $i$-th factor.

We can generalize this notation by setting, for any collection of sub-indices  $\mathbf{u}=\{u_1,\dots,u_i\}\subset \{1,\dots,k\}$,
\begin{center}$\pi_{\mathbf{u}} =$ projection to the product of the factors $u_1,\dots, u_i$.
\end{center}
In particular $\pi_{\{1,\dots,i\}}$ is the projection to the product of the first $i$ factors.
\smallskip

For any subset $\mathbf{u}=\{u_1,\dots,u_i\}\subset \{1,\dots,k\}$, we will denote with $\Oo(\mathbf{u})$ the line
bundle on $\PP^{n_1}\times\dots\times\PP^{n_k}$ pull back of the hyperplane bundle on $\nu(\pi_{\mathbf{u}}(Y))$.

We will write simply $\Oo(1)$ when $\mathbf{u}=\{1,\dots,k\}$, i.e. $\Oo(1)$ is the pull back of
the hyperplane bundle in the Segre embedding of $\PP^{n_1}\times\dots\times\PP^{n_k}$.

For any subset $Z$ of $\PP^{n_1}\times\dots\times\PP^{n_k}$ we will write consequently
$$\Ii _Z(\mathbf{u})=\Ii_Z\otimes\Oo(\mathbf{u}),\quad \Oo _Z(\mathbf{u})=\Oo_Y\otimes\Oo(\mathbf{u})$$
and write $\Ii_Z(1),\Oo_Z(1)$ when $\mathbf{u}=\{1,\dots,k\}.$

Notice that the dimension $h^0(\Ii_Z(\mathbf{u}))$ corresponds then
to the co-dimension of the linear span of $\nu(\pi_{\mathbf{u}}(Z))$.
Obviously, if $Z$ is zero-dimensional and $h^1(\Ii _Z(\mathbf{u}))=0$, then also
$h^1(\Ii _Z(\mathbf{u}'))=0$ for all $\mathbf{u}'\supseteq \mathbf{u}$.
\medskip

We will say that a finite subset $S\subset \PP^{n_1}\times\dots\times\PP^{n_k}$ has {\em different coordinates}
if for all $i=1,\dots,k$ the projection $\pi_i$ to the $i$-th factor is an embedding of $S$
into $\PP^{n_i}$.
\medskip

We will need the process of {\it residuation} with respect to a divisor.
Let $X$ be a variety 
For any zero-dimensional scheme $Z\subset X$ and for any effective divisor
$D$ on $X$ the ``~{\it residue} of $Z$ w.r.t. $D$~" is the scheme $Res_D(Z)$
defined by the ideal sheaf $\Ii_Z:\Ii_D$, where $\Ii_Z,\Ii_D$ are the ideal sheaves
of $Z$ and $D$ respectively. The multiplication by local equations of $D$ defines
the exact sequence of sheaves:
\begin{equation}\label{resid}
0\to \Ii_{Res_D({ {Z}})}(-D)\to \Ii_{{Z}} \to
\Ii_{D\cap {{Z}},D}\to 0
\end{equation}
where the rightmost sheaf is the ideal sheaf of $D\cap Y$ in $D$.
\medskip

We will identify elements $T\in\PP^M$, which is the space of
embedding of $Y=\PP^{n_1}\times\dots\times\PP^{n_k}$, as tensors
of type  $(n_1+1)\times\dots\times (n_k+1)$ (modulo scalars).

\begin{definition}\label{Definition:NonRedundant}
A finite reduced subset $S\subset Y=\PP^{n_1}\times\dots\times\PP^{n_k}$
is a {\emph {decomposition}} of $T$ if $T\in\spa{\nu(S)}$ (with an abuse of notation sometime we will say that
also $\nu(S)\subset X$ is a decomposition of $T$). If moreover $T\notin\spa{\nu(S')}$ for any
$S'\subsetneq S$, the decomposition $S$ is said to be {\emph{ not-redundant}}.
Finally, if $\sharp S= \min \{\sharp S' \, | \, S'\subset Y \hbox{ and }T \in \langle \nu (S') \rangle \}$
then  $S$ is called a {\emph{minimal}} decomposition of $T$.
\end{definition}

\begin{remark} Clearly ``~not-redundant~" does not imply ``~minimal~".
As we will detail in Theorem \ref{add11}, it is alway possible to build a non-minimal non-redundant decomposition.
\end{remark}

Our target is to study the identifiability of a tensor $T\in\PP^M$
(i.e. when $T$ has only one minimal decomposition) by means of the
knowledge of the numbers $h^0(\Ii_A(\mathbf{u}))$, for all
$\mathbf{u}\subset\{1,\dots,k\}$, where $A{ \subset Y}$ is
a finite set which corresponds to a decomposition of $T$.

We will use the following notions for the rank of $T$.

\begin{definition}\label{Definition:Ranks}

The {\emph {rank}, $\mathrm{rk}(T)$, of $T$} is the minimum $r$
for which there exists a minimal decomposition of $T$.

The {\emph {cactus rank}} of $T$ is the minimum $r$ for which there exists a zero-dimensional subscheme
$\Gamma\subset X$ with $\deg(\Gamma)=r$ and $T\in\spa{\Gamma}$.
\end{definition}

Clearly:
\begin{center} rank of $(T) \geq$  cactus rank of $(T)$.
\end{center}
In analogy to the rank case, we will say that a zero-dimensional scheme $Z\subset Y$
is a ``~minimal cactus decomposition~" of $T\in \mathbb{P}^M$ if $Z$ is of minimal
degree among the zero-dimensional schemes $Z'\subset Y$ such that $T\in \langle \nu (Z') \rangle$.
\medskip

If $\sigma_r$ is the $r$-secant variety of $X$,
then all tensors of rank $r$ belong to $\sigma_r$.

For any tensor $T$ of rank $r$, let $\Ss(T)$ denote the set of all (reduced) finite subsets
$S\in Y$ of cardinality $r$ such that $T\in\spa{\nu(S)}$.
Of course for all $S\in\Ss(T)$ the image $\nu(S)$ is linearly independent,
for otherwise $T$ is contained in the span of a subset
of cardinality $r'<r$, thus it has rank smaller than $r$.

A tensor $T$ is {\em identifiable} when $\Ss(T)$ is a singleton. The abstract
product  $Y=\PP^{n_1}\times\dots\times\PP^{n_k}$ is ``~{\em generically identifiable} in rank $r$~"
if the general $T\in\sigma_r$ is identifiable.

The main tool for our analysis of the identifiability of a tensor
$T\in\PP^M$ relies in the following proposition that is an immediate consequence of \cite[Lemma 1]{bb}
if we consider the residue exact sequence of $A\cup B$ cut by a linear space containing $A$.

\begin{proposition}\label{bb}
Consider linearly independent zero-dimensional schemes $A,B\subset
Y$. Then the  linear spans of the images $\nu(A),\nu(B)$ in the
Segre map satisfy
$$ \dim(\spa{\nu(A)}\cap\spa{\nu(B)}) = \dim(\spa{\nu (A\cap B)}+h^1(\Ii_{A\cup B}(\mathbf{1})).$$
\end{proposition}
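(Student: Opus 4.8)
The plan is to deduce Proposition \ref{bb} from the cited \cite[Lemma 1]{bb} by applying it to a suitably chosen linear space. First I would recall the standard Grassmann-type relation: for any two linear subspaces $V,W$ of a projective space, $\dim\spa{V\cup W} = \dim V + \dim W - \dim(V\cap W)$. Applied to $V=\spa{\nu(A)}$ and $W=\spa{\nu(B)}$ this rephrases the claim as a statement about $\dim\spa{\nu(A)\cup\nu(B)} = \dim\spa{\nu(A\cup B)}$, namely that
\[
\dim\spa{\nu(A\cup B)} = \dim\spa{\nu(A)} + \dim\spa{\nu(B)} - \dim\spa{\nu(A\cap B)} - h^1(\Ii_{A\cup B}(\mathbf{1})).
\]
Since $\dim\spa{\nu(W)} = \deg(W) - 1 - h^1(\Ii_W(\mathbf 1))$ for a linearly independent zero-dimensional scheme $W$ (linear independence meaning precisely that $h^1$ of the subschemes vanishes appropriately), I would first rewrite all four spans in terms of degrees and $h^1$'s, reducing the identity to the additivity $\deg(A\cup B) + \deg(A\cap B) = \deg A + \deg B$ together with the combination of $h^1$ terms. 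That additivity is the scheme-theoretic inclusion-exclusion, which holds because locally the ideal of $A\cup B$ is the intersection $\Ii_A\cap\Ii_B$ and the ideal of $A\cap B$ is $\Ii_A+\Ii_B$, giving the exact sequence $0\to \Oo_{A\cup B}\to \Oo_A\oplus\Oo_B\to\Oo_{A\cap B}\to 0$.

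Next, to handle the $h^1$ bookkeeping correctly I would invoke the residue exact sequence \refeqn{resid}. Choose a linear space $H\subset\PP^M$ of minimal dimension containing $\nu(A)$; pulling back, take the divisor $D$ on $Y$ (a sum of hyperplane sections in $\Oo(1)$) with $A\subset D$ and $\Res_D(A\cup B)\subseteq B$. The exact sequence $0\to \Ii_{\Res_D(A\cup B)}(-D)\to\Ii_{A\cup B}\to\Ii_{D\cap(A\cup B),D}\to 0$, twisted by $\Oo(1)$ and pushed to cohomology, relates $h^1(\Ii_{A\cup B}(\mathbf 1))$ to the failure of $\nu(B)$ to impose independent conditions modulo $H$, which is exactly the quantity \cite[Lemma 1]{bb} controls; this is the ``immediate consequence'' the text refers to. The technical point to get right is that $A\cap B$, $A$ and $B$ are all linearly independent (hypothesis), so their individual $h^1$'s vanish and only the $h^1$ of the union survives, which is what makes the formula clean.

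The main obstacle I anticipate is purely bookkeeping: matching the dimension count coming out of \cite[Lemma 1]{bb} — which is phrased in terms of the projection of $\nu(B)$ away from $\spa{\nu(A)}$ — with the intrinsic $h^1(\Ii_{A\cup B}(\mathbf 1))$, and making sure the choice of $D$ (equivalently of the ambient linear space and the number of hyperplanes cutting it) does not introduce extra base locus that would spoil $\Res_D(A\cup B)=B$. Once the correspondence between the projection-theoretic statement and the cohomological statement is pinned down via \refeqn{resid}, the rest is the elementary Grassmann formula plus inclusion-exclusion on degrees, so no genuinely hard estimate is needed; the content is entirely in organizing these identifications.
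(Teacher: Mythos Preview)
Your first reduction is correct and, in fact, already a complete proof: once you apply the Grassmann formula and rewrite every span as $\dim\spa{\nu(W)}=\deg(W)-1-h^1(\Ii_W(1))$, the hypotheses $h^1(\Ii_A(1))=h^1(\Ii_B(1))=0$ (hence $h^1(\Ii_{A\cap B}(1))=0$) make the $h^1(\Ii_{A\cup B}(1))$ term appear identically on both sides and cancel, so the whole statement reduces to $\deg(A\cup B)+\deg(A\cap B)=\deg A+\deg B$, which is exactly the Mayer--Vietoris sequence you wrote down. Nothing further is required.

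This is a genuinely different (and more elementary) route than the paper's. The paper instead quotes \cite[Lemma 1]{bb} together with the residue sequence cut by a linear space through $A$; that argument computes $\dim(\spa{\nu(A)}\cap\spa{\nu(B)})$ directly by projecting $\nu(B)$ away from $\spa{\nu(A)}$ and reading off the defect as $h^1$ of the union. Your approach trades that external lemma for the standard degree inclusion--exclusion, which is cleaner here. Your second paragraph, where you try to redo the paper's residue argument, is superfluous and slightly off: a linear space containing $\nu(A)$ is not a divisor on $Y$ in $|\Oo(1)|$ unless it is a hyperplane, so ``$D$ a sum of hyperplane sections with $A\subset D$'' does not literally model the cut the paper has in mind, and your worry about $\Res_D(A\cup B)=B$ is a symptom of this mismatch. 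Simply drop that paragraph; your Grassmann-plus-inclusion--exclusion argument stands on its own.
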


\section{Rank}\label{Section:Rank}

If we know a decomposition $T=T_1+\dots+T_r$ of $T$ in terms of tensors $T_i$
of rank $1$, in general we cannot directly conclude that $r$ is the rank of $T$.

Our analysis will prove that, for small values of $r$, the rank of
$T$ is $r$ provided that the summands correspond to points  in a
suitably general geometric position. We will give a criterion to
find a lower bound for the cactus rank (and therefore also for the
rank).


\begin{theorem}\label{add7}
Fix a partition $ E\sqcup F=\{1,\dots ,k\} $ of the $k$ factors of the abstract product
$Y=\PP^{n_1}\times\dots\times\PP^{n_k}$, i.e.  $E = \{a_1,\dots ,a_{k-h}\}$ and
$F =\{b_1,\dots ,b_h\}$ for some fixed $0<h<k$. Let $M_F:=\prod _{i=1}^{h} (n_{b_i}+1)$ be
the affine dimension of the ambient space of the Segre  embedding of the $F$-factors $Y_F:=\PP^{b_1}\times\dots\times \PP^{b_h}$.
Now fix $0<c<M_F$ and let
$A\subset {{Y}}$ be a zero-dimensional scheme which
satisfies $h^1(\Ii _A(E)) =0$ and $h^0(\Ii _A(F)) < M_F -c$. Take
any $T\in \spa{\nu (A)}$ such that $T\notin\spa{\nu (A')}$ for any
$A'\subsetneq A$. Then there are no zero-dimensional schemes
$B\subset {{Y}}$ with $\deg(B)\le c$ such that $T\in
\spa{\nu (B)}$.
\end{theorem}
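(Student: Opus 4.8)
The plan is to argue by contradiction: suppose there is a zero-dimensional scheme $B\subset Y$ with $\deg(B)\le c$ and $T\in\spa{\nu(B)}$. The idea is to extract from the pair $(A,B)$ a contradiction by comparing linear spans through the flattening $\pi_E\sqcup\pi_F$. First I would reduce to the case where $B$ is chosen minimal for the property $T\in\spa{\nu(B)}$, and in particular $\nu(B)$ is linearly independent and $A\cap B$ (scheme-theoretically) is a proper subscheme of $A$ — indeed, if $A\subseteq B$ then $\deg B\ge\deg A$, and one checks $\deg A$ is too large: the hypothesis $h^0(\Ii_A(F))<M_F-c$ forces $\deg A$ (which is at least the codimension-complement $M_F-h^0(\Ii_A(F))$ of the span of $\nu(\pi_F(A))$, hence $>c\ge\deg B$), a contradiction. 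So $A\not\subseteq B$, and likewise we may assume $A$ is minimal so that $T$ lies in no span of a proper subscheme of $A$.

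Next, since $T\in\spa{\nu(A)}\cap\spa{\nu(B)}$ and $\nu(A),\nu(B)$ are both linearly independent (linear independence of $\nu(A)$ follows from the non-redundancy hypothesis on $T$ with respect to $A$), I would like to apply \refprop{bb} to the pair $A,B$. This gives
$$\dim\bigl(\spa{\nu(A)}\cap\spa{\nu(B)}\bigr)=\dim\spa{\nu(A\cap B)}+h^1(\Ii_{A\cup B}(\mathbf{1})).$$
Because $T$ is in the left-hand span but, by minimality, not in $\spa{\nu(A\cap B)}$ (as $A\cap B\subsetneq A$), we must have $h^1(\Ii_{A\cup B}(\mathbf{1}))>0$. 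Now the key move is to kill this $h^1$ using the hypothesis $h^1(\Ii_A(E))=0$ via residuation with respect to a divisor $D$ in the linear system $\Oo(E)$: choosing $D$ to contain the finite set underlying $A$ (possible since $h^0(\Ii_A(E))\ge M_F-h^0(\Ii_A(F))\,$-type count — more simply, a general divisor of $\Oo(E)$ through $\red(A)$ exists and, shrinking if necessary, one arranges $D\supseteq A$ using $h^1(\Ii_A(E))=0$), the residue exact sequence \refeqn{resid} for $A\cup B$ cut by $D$ reads
$$0\to\Ii_{\Res_D(A\cup B)}(-D)\otimes\Oo(\mathbf{1})\to\Ii_{A\cup B}(\mathbf{1})\to\Ii_{(A\cup B)\cap D,\,D}(\mathbf{1})\to 0.$$
Since $A\subseteq D$ we get $\Res_D(A\cup B)\subseteq\Res_D(B)$, hence $\deg\Res_D(A\cup B)\le\deg B-\deg(B\cap D)\le c$, and $\Oo(\mathbf{1})(-D)=\Oo(F)$, so the left term is $\Ii_{\Res_D(A\cup B)}(F)$ twisted appropriately on the relevant factors; the cohomology of the right-hand term is controlled on $D$, where the restricted bundle is the pullback of $\Oo(F)$ and $A\cap D=A$ imposes independent conditions by $h^1(\Ii_A(F))=0$ (which follows from $h^0(\Ii_A(F))<M_F-c<M_F$ being "small enough", or directly from $h^1(\Ii_A(E))=0$ by the monotonicity remark $h^1(\Ii_A(\mathbf u'))=0$ for $\mathbf u'\supseteq\mathbf u$). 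Chasing the sequence transfers the positivity $h^1(\Ii_{A\cup B}(\mathbf{1}))>0$ down to $h^1\bigl(\Ii_{\Res_D(A\cup B)}(F)\bigr)>0$ on $Y_F$, i.e. the image $\nu_F(\Res_D(A\cup B))$ fails to impose independent conditions on hyperplanes of $\PP^{M_F-1}$.

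Finally I would derive the numerical contradiction on the $F$-side. The scheme $C:=\Res_D(A\cup B)$ has degree $\le c<M_F$, so a zero-dimensional scheme of degree $<M_F$ on $Y_F$ with $h^1(\Ii_C(F))>0$ would already be impossible if the span of $\nu_F(C)$ had the expected dimension; the obstruction must instead come from the interplay with $A$ — more precisely one must feed in $h^0(\Ii_A(F))<M_F-c$, which says $\dim\spa{\nu_F(A)}\ge M_F-1-(M_F-c-1)=c$, i.e. the $F$-projection of $A$ already spans a large linear space, leaving no room for an extra $h^1$ coming from a degree-$\le c$ scheme. I expect the main obstacle to be exactly this bookkeeping step: arranging the divisor $D\in|\Oo(E)|$ to contain $A$ while keeping $\Res_D(B)$ of degree $\le c$, and then converting the surviving $h^1$ on $Y_F$ into a clean inequality that contradicts $h^0(\Ii_A(F))<M_F-c$. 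Getting the residuation set up so that the $E$-direction hypothesis $h^1(\Ii_A(E))=0$ is genuinely what makes the right-hand restricted term cohomologically trivial is the delicate point; everything else is a diagram chase plus dimension count.
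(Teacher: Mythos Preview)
Your residuation is set up backwards, and this is a genuine gap, not just bookkeeping. You take $D\in|\Oo(E)|$ containing $A$, so that $\Res_D(A\cup B)\subseteq B$ and the twist becomes $\Oo(F)$. But then the surviving $h^1$ is $h^1(\Ii_{\Res_D(A\cup B)}(F))>0$ for a subscheme of $B$, and you have \emph{no} hypothesis whatsoever on how $B$ behaves with respect to $\Oo(F)$: nothing prevents $B$ from being, say, supported at a single point, so that $h^1(\Ii_{B'}(F))>0$ for subschemes $B'\subseteq B$ is entirely possible. Your attempt to feed in $h^0(\Ii_A(F))<M_F-c$ cannot help, because that inequality concerns $A$, while your residue lives in $B$. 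Two further errors compound this: (i) your claim that $h^1(\Ii_A(F))=0$ follows from $h^1(\Ii_A(E))=0$ ``by monotonicity'' is false, since $E$ and $F$ are disjoint and the monotonicity remark requires $F\supseteq E$; (ii) a divisor $D\in|\Oo(E)|$ containing $A$ need not exist at all, since $h^1(\Ii_A(E))=0$ only gives $\deg A\le M_E$, and if $\deg A=M_E$ then $h^0(\Ii_A(E))=0$.

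The paper's proof reverses the roles: take $D\in|\Ii_B(F)|$, which exists because $\deg B\le c<M_F$ forces $h^0(\Ii_B(F))>0$. The hypothesis $h^0(\Ii_A(F))<M_F-c\le h^0(\Ii_B(F))$ then guarantees $A\not\subseteq D$, so $(A\cap D)\cup B\subsetneq A\cup B$ and one gets the strict drop $h^1(\Ii_{(A\cap D)\cup B}(1))<h^1(\Ii_{A\cup B}(1))$ from non-redundancy of $A$. The residual sequence (twisting by $\Oo(1)(-D)=\Oo(E)$) now forces $h^1(\Ii_{\Res_D(A\cup B)}(E))>0$; but $B\subset D$ gives $\Res_D(A\cup B)\subseteq A$, and $h^1(\Ii_A(E))=0$ passes to subschemes, yielding the contradiction. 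So the hypothesis $h^1(\Ii_A(E))=0$ is used on the \emph{residue}, not on the restriction term, and the hypothesis $h^0(\Ii_A(F))<M_F-c$ is used only to ensure $A\not\subseteq D$.
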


\begin{proof}
Notice that the Segre embedding of the projection $\pi_F$ maps
${{Y}}$ to $\PP^{{M_F}-1}$.

Since $h^1(\Ii _A(E)) =0$, we have $h^1(\Ii _A(1)) =0$. The condition $h^0(\Ii _A(F)) < M_F-c$
implies that $\deg (A)> c$. Assume that the theorem fails and take $B\subset Y$ with $\deg (B)\le c$
and $T\in \spa{\nu (B)}$.
Since $\deg (A)> \deg (B)$ and $T\notin\spa{\nu (A')}$ for any $A'\subsetneq A$, we have $B\nsubseteq A$.
Moreover $h^1(\Ii _{A\cup B}(1)) >0$, by \refprop{bb}.
More precisely, since $T\in \spa{\nu (B)}$ and $T\notin \spa{\nu (A')}$ for any $A'\subsetneq A$, we have
\begin{equation}\label{h1s}
h^1(\Ii _{A\cup B}(1)) > h^1(\Ii _{A'\cup B}(1))
\end{equation}
for all $A'\subset A$ with $A\cup B\neq A'\cup B$.

Since $\deg (B)\le c< M_F$, we have $h^0(\Ii _B(F)) > 0$. Take a
general divisor $D\in |\Ii _B(F)|$. In other words,  $D$ is the
inverse image in ${{Y}}$ of a hyperplane in the Segre
embedding of $Y_F$ and $B\subset D$. Since $h^0(\Ii _A(F)) < M_F
-c \le h^0(\Ii _B(F))$, then $A\nsubseteq D$, so that $(D\cap
A)\cup B$ is strictly contained in $A\cup B$. Hence by
\refeqn{h1s} we get $h^1(\Ii _{(D\cap A)\cup B}(1)) < h^1(\Ii
_{A\cup B}(1))$. The residual exact sequence  \refeqn{resid}
applied to $D$  gives $h^1(\Ii _{\Res _D(A\cup B)}(E))>0$. Since
$\Res _D(A\cup B)\subseteq A$,  we get a contradiction.
\end{proof}

Observe that the condition $h^1(\Ii _A(E)) =0$ can be satisfied only when $\deg(A) \le \prod _{i=1}^{k-h} (n_{a_i}+1)$,
the affine dimension of the ambient space of the Segre embedding of the $E$-factors $Y_E=\PP^{a_1}\times\dots\times \PP^{a_{k-h}}$.
\medskip

We can rephrase \refthm{add7} to produce results on the rank of tensors.

\begin{corollary}\label{add7cor} With the previous notation, if $T$ sits in the linear span of a scheme $A\subset {{Y}}$ which
satisfies the assumptions of \refthm{add7}, then the cactus rank
of $T$ is a least $c+1$. Hence also the rank of $T$ cannot be
smaller than $c+1$.
\end{corollary}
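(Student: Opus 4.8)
The plan is to observe that the statement is essentially a restatement of \refthm{add7} in the language of \refdef{Definition:Ranks}, so the only real work is translating between zero-dimensional schemes living in the abstract product $Y$ and those living in the Segre variety $X\subset\PP^M$.

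First I would recall that, by \refdef{Definition:Ranks}, the cactus rank of $T$ is the least degree of a zero-dimensional scheme $\Gamma\subset X$ with $T\in\spa{\Gamma}$. Since $\nu\colon Y\to X$ is the Segre embedding, hence an isomorphism onto its image, every such $\Gamma$ is of the form $\nu(B)$ for a unique zero-dimensional scheme $B\subset Y$, and $\deg(B)=\deg(\Gamma)$. Therefore the cactus rank of $T$ equals $\min\{\deg(B)\ :\ B\subset Y\text{ zero-dimensional},\ T\in\spa{\nu(B)}\}$.

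Next I would apply \refthm{add7} directly: by hypothesis $A$ and $T$ satisfy its assumptions, so the theorem asserts that no zero-dimensional scheme $B\subset Y$ with $\deg(B)\le c$ has $T\in\spa{\nu(B)}$. Combining this with the previous paragraph, every zero-dimensional scheme $\Gamma\subset X$ with $T\in\spa{\Gamma}$ satisfies $\deg(\Gamma)\ge c+1$, so the cactus rank of $T$ is at least $c+1$. Finally, the inequality $\mathrm{rk}(T)\ge$ cactus rank of $T$, recorded right after \refdef{Definition:Ranks}, gives $\mathrm{rk}(T)\ge c+1$ as well.

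There is no serious obstacle here; the only point requiring (minor) care is the identification of zero-dimensional subschemes of $X$ with those of $Y$ through the Segre embedding, together with the fact that this identification preserves degree, which is exactly what allows \refthm{add7} — stated for schemes in $Y$ — to control the cactus rank, which is defined via schemes in $X$.
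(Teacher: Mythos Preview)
Your proposal is correct and matches the paper's approach: the paper states this corollary without proof, treating it as an immediate rephrasing of \refthm{add7} in the language of \refdef{Definition:Ranks}. Your added remark about identifying zero-dimensional subschemes of $X$ with those of $Y$ via the Segre embedding (and the preservation of degree) is the only bookkeeping needed, and you handle it correctly.
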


{\begin{corollary}\label{alessandra} Let $A\subset Y$ be a
zero dimensional scheme of $\deg(A)=c+1$ (resp. a finite set with
$\sharp(A)=c+1$). With the Notation of Theorem \ref{add7}, assume
that $h^1(\Ii_A(E))=0$ and $h^1(\Ii_A(F))=0$. Then any $T\in
\langle \nu(A)\rangle$ such that $T\notin \langle \nu(A')\rangle$
for any $A'\subsetneq A$ has cactus rank (resp. cactus rank and
rank) equal to $c+1$.
\end{corollary}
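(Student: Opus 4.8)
The plan is to deduce Corollary \ref{alessandra} directly from Theorem \ref{add7} together with Corollary \ref{add7cor}, by checking that the hypotheses we are given imply the hypotheses of the theorem (with the given value of $c$), and then supplying the matching upper bound for free.

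\smallskip

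First I would verify that $A$ satisfies the assumptions of \refthm{add7}. The vanishing $h^1(\Ii_A(E))=0$ is assumed outright. For the second condition, recall from the preliminaries that $h^0(\Ii_A(F))$ is the codimension of $\spa{\nu(\pi_F(A))}$ in $\PP^{M_F-1}$, so $h^0(\Ii_A(F)) = M_F - \hbox{(affine dimension of }\spa{\nu(\pi_F(A))})$. The hypothesis $h^1(\Ii_A(F))=0$ says that $\nu(\pi_F(A))$ imposes independent conditions, i.e. its span has affine dimension exactly $\min(\deg(A), M_F) = \deg(A) = c+1$ (here $\deg(A)=c+1 \le M_F$ because $0<c<M_F$). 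Hence $h^0(\Ii_A(F)) = M_F - (c+1) = M_F - c - 1 < M_F - c$, which is precisely the inequality $h^0(\Ii_A(F)) < M_F - c$ required by \refthm{add7}. Since also $T\notin\spa{\nu(A')}$ for every $A'\subsetneq A$ by assumption, Theorem \ref{add7} applies and yields: there is no zero-dimensional $B\subset Y$ with $\deg(B)\le c$ and $T\in\spa{\nu(B)}$. Equivalently, by Corollary \ref{add7cor}, the cactus rank of $T$ is at least $c+1$, and hence so is the rank.

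\smallskip

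For the reverse inequality, observe that $A$ itself is a witness: $T\in\spa{\nu(A)}$ and $\deg(A)=c+1$, so the cactus rank of $T$ is at most $c+1$; combining with the lower bound gives cactus rank $=c+1$. In the reduced case $\sharp(A)=c+1$, the set $A$ is a genuine (reduced, finite) decomposition of $T$ with $c+1$ points, so $\mathrm{rk}(T)\le c+1$; since the rank is bounded below by the cactus rank, which equals $c+1$, we conclude $\mathrm{rk}(T)=c+1$ as well. Note moreover that in this reduced case the decomposition $A$ is automatically non-redundant, since $T\notin\spa{\nu(A')}$ for $A'\subsetneq A$, so the statement also certifies minimality of the decomposition $A$.

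\smallskip

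I do not anticipate a genuine obstacle here: the only thing to be careful about is the translation between the cohomological quantities $h^0(\Ii_A(F))$, $h^1(\Ii_A(F))$ and the dimension of the span of $\nu(\pi_F(A))$, and the bookkeeping of the strict versus non-strict inequalities ($M_F - c - 1 < M_F - c$) so that the hypothesis of \refthm{add7} is met exactly. Everything else is a direct citation of Theorem \ref{add7} and Corollary \ref{add7cor} plus the trivial observation that $A$ realizes the upper bound.
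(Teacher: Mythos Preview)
Your proof is correct and is precisely what the paper has in mind: the paper's own proof is the single line ``It is straightforward from Theorem \ref{add7}'', and you have faithfully unpacked that by converting $h^1(\Ii_A(F))=0$ into the inequality $h^0(\Ii_A(F))=M_F-c-1<M_F-c$, invoking Theorem \ref{add7}/Corollary \ref{add7cor} for the lower bound, and using $A$ itself for the upper bound.
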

\begin{proof} It is straightforward from Theorem \ref{add7}
\end{proof}
}

Let we point out an application to the case of 3-way tensors.

\begin{proposition}\label{333}
Consider $k=3$ and let $T$ be a tensor of type
$(n_1+1)\times(n_2+1)\times (n_3+1)$, which has a not-redundant
 decomposition $T=T_1+\dots+T_r$, where the $T_i$'s are tensors of rank $1$. Identify each $T_i$ with a point
in $X=\nu(\PP^{n_1}\times\PP^{n_2}\times\PP^{n_3})$ and set $A=\{T_1,\dots,T_r\}\subset X$.
Call $A_E$ (resp. $A_F$) the projection of $A$ to $Y_E=\PP^{n_1}\times\PP^{n_2}$ (resp. $Y_F=\PP^{n_3}$).

Assume that $A$ has different coordinates, $A_E$ is linearly independent and $A_F$ is contained in no
hyperplanes of $\PP^{n_3}$. Then the rank of $T$ is at least $n_3+1$.
\end{proposition}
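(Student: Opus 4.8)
The plan is to obtain the statement as a direct application of \refthm{add7} to the partition $E=\{1,2\}$, $F=\{3\}$ of the three factors, with the choice $c=n_3$. We may assume $n_3\ge 1$, since for $n_3=0$ the assertion is empty (a nonzero tensor always has rank at least $1$). With this partition one has $M_F=n_3+1$, the Segre embedding of $Y_F=\PP^{n_3}$ is the identity, and $\nu(\pi_F(A))=A_F\subset\PP^{n_3}$, while $\nu(\pi_E(A))$ is a subset of the Segre variety of $Y_E=\PP^{n_1}\times\PP^{n_2}$. Throughout we identify each rank-one summand $T_i$ with the point $\nu^{-1}(T_i)\in Y$, so that $A\subset Y$ is a reduced set with $\deg(A)=r$.

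The first step is to translate the three geometric hypotheses into the numerical conditions required by \refthm{add7}. Recall that $h^0(\Ii_A(\mathbf{u}))$ equals the codimension of $\spa{\nu(\pi_{\mathbf{u}}(A))}$, and that from the structure sequence $0\to\Ii_A(\mathbf{u})\to\Oo_Y(\mathbf{u})\to\Oo_A\to 0$ together with $h^1(\Oo_Y(\mathbf{u}))=0$ (which holds for $\mathbf{u}=E$ and $\mathbf{u}=F$ by Künneth), the vanishing $h^1(\Ii_A(\mathbf{u}))=0$ is equivalent to $\nu(\pi_{\mathbf{u}}(A))$ being a set of exactly $\deg(A)$ linearly independent points. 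Since $A$ has different coordinates, $\pi_1$ — hence also $\pi_E$ — embeds $A$, so $\nu(\pi_E(A))$ consists of $r$ distinct points, and these are linearly independent precisely because $A_E$ is; thus $h^1(\Ii_A(E))=0$. On the other side, $h^0(\Ii_A(F))$ is the number of independent hyperplanes of $\PP^{n_3}$ through $A_F$, which is $0$ exactly because $A_F$ lies in no hyperplane; hence $h^0(\Ii_A(F))=0<1=M_F-c$.

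It remains to observe that $0<c=n_3<n_3+1=M_F$, and that the non-redundancy of the decomposition $T=T_1+\dots+T_r$ is exactly the hypothesis $T\in\spa{\nu(A)}$ together with $T\notin\spa{\nu(A')}$ for every $A'\subsetneq A$. So all the hypotheses of \refthm{add7} are verified, and the theorem yields that there is no zero-dimensional scheme $B\subset Y$ with $\deg(B)\le n_3$ and $T\in\spa{\nu(B)}$. In particular no reduced set of at most $n_3$ points can be a decomposition of $T$, whence $\mathrm{rk}(T)\ge n_3+1$; by \refcor{add7cor} in fact even the cactus rank of $T$ is at least $n_3+1$.

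I do not anticipate a genuine obstacle: the argument is essentially a dictionary check between the geometric and the cohomological formulations. The single point that deserves care is the role of the ``different coordinates'' hypothesis, which is used precisely to guarantee that $\pi_E$ is injective on $A$, so that ``$A_E$ linearly independent'' (a condition on $r$ points) really forces $h^1(\Ii_A(E))=0$, rather than merely being compatible with a smaller image; the analogous injectivity of $\pi_F$ is what makes ``$A_F$ spans $\PP^{n_3}$'' the correct reading of $h^0(\Ii_A(F))=0$.
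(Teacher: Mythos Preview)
Your proof is correct and follows exactly the paper's approach: apply \refthm{add7} with $E=\{1,2\}$, $F=\{3\}$, and $c=n_3$, translating the geometric hypotheses on $A_E$ and $A_F$ into $h^1(\Ii_A(E))=0$ and $h^0(\Ii_A(F))=0$. One small inaccuracy in your closing remark: the injectivity of $\pi_F$ is not actually needed to identify $h^0(\Ii_A(F))$ with the space of hyperplanes through $A_F$, since a pulled-back linear form vanishes on $A$ if and only if it vanishes on the image $\pi_F(A)$, regardless of whether the projection collapses points; but this does not affect the argument.
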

\begin{proof}
In the notation of \refthm{add7}, take $E=\{1,2\}$ and $F=\{3\}$. Then our assumptions on $A_E, A_F$
imply that $A$ satisfies  $h^1(\Ii _A(E)) =h^0(\Ii _A(F))=0$. Thus $T$ cannot have a decomposition
with $M_F-1=n_3$ summands.
\end{proof}

\begin{example}
Kruskal's Theorem for the identifiability of tensors (\cite{Kru}) provides results similar
to the previous proposition for the rank. We notice that the numerical range of application of
\refprop{333} is sometimes wider than Kruskal's range.

For instance, consider a tensor $T$
of type $3\times 4\times 6$ having a decomposition with $6$ summands. If the decomposition
determines a subset $A$ satisfying the assumptions of \refprop{333}, we can conclude
that the rank of $T$ is $6$. We cannot get the same conclusion directly with
Kruskal's Theorem because we are outside Kruskal's numerical range, since $6>(3+4+6-2)/2$.
\end{example}

The following example should be considered as a step towards the
Strassen's Conjecture on the rank of a block tensor (see \cite{Stra}).

\begin{example}\label{Strassen}
\refprop{333} can give results on the rank of a sum of tensors, when we have some information
on the decompositions of the summands.

For instance, consider again tensors of type $3\times 4\times 6$ and take a tensor $T$
which is the sum $T=T'+T''$ of two tensors of rank $3$. Consider a decomposition of $T'$ (resp. $T''$)
in a sum of three tensors of rank $1$ and call $S'$ (resp. $S''$) the set of cardinality $3$
in the product $\PP^2\times\PP^3\times\PP^5$ determined by the decomposition.

If the set $S=S'\cup S''$ has cardinality $6$ and satisfies the assumptions of \refprop{333}
(i.e. both $\pi_{\{1,2\}}(\nu(S))$ and $\pi_{\{3\}}(\nu(S))$ are linearly independent),
then we can conclude that the rank of $T$ is $6$.
\end{example}

We show below that, on the contrary, if we increase the
cardinality of a decomposition, we can always construct new
non-redundant decompositions of a tensor $T$.

\begin{example}\label{add9}
Fix $P\in {Y}$ and write $P =(p_1,\dots ,p_k)$ with
$p_i\in \PP^{n_i}$. Assume $n_i>0$. Take two points $b_i,c_i\in
\PP^{n_i}$ such that $p_i\neq b_i,c_i$ but $p_i$ is contained in
the line of $\PP^{n_i}$ spanned by $b_i$ and $c_i$. Let $O_i:=
(u_1,\dots ,u_k)$, $Q_i:= (v_1,\dots ,v_k)$ be the points of
${Y}$ with $u_j=v_j$
for all $j\ne i$, $u_i=b_i$ and $v_i=c_i$. We
have $\nu ({P})\in \spa{\{\nu (O_i),\nu (Q_i)\}}$, and of course
$\nu ({P})\notin \spa{\nu(S')}$ for any $S'\subsetneq
\{O_i,Q_i\}$.
\end{example}

We show that indeed the previous construction  can yield a {\it
non-redundant} decomposition. Moreover the following also show
that having found a non-redundant decomposition does not imply
that it is a minimal one.

\begin{theorem}\label{add11}
Assume $n_i>0$ for at least one $i$.
Take a finite set $A\subset X$ of cardinality $r\leq M$, such that $\nu (A)\subset X$ is linearly independent.
 Take a general $T\in \spa{\nu(A)}$. Then there
exists a non-redundant decomposition $S\subset Y$ of $T$ of
cardinality $r+1$.
\end{theorem}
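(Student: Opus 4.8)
The plan is to produce the new decomposition $S$ by a single application of the "line through a point" trick from Example \ref{add9}, and then to show that for a \emph{general} $T$ in the span of $\nu(A)$ the resulting set of $r+1$ points is non-redundant. First I would pick a point $P \in A$ (there is at least one since $r \ge 1$; if $r = 0$ the statement is vacuous because then $T = 0$, so we may assume $r \ge 1$), choose an index $i$ with $n_i > 0$, and replace $P = (p_1,\dots,p_k)$ by the two points $O_i, Q_i \in Y$ as in Example \ref{add9}, so that $\nu(P) \in \spa{\{\nu(O_i), \nu(Q_i)\}}$. Setting $S := (A \setminus \{P\}) \cup \{O_i, Q_i\}$, we immediately get $T \in \spa{\nu(A)} \subseteq \spa{\nu(S)}$ and $\sharp S = r+1$ (the two new points are distinct from each other and, after a generic choice of the line, distinct from the $r-1$ old points of $A \setminus \{P\}$). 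So $S$ is a decomposition of $T$; the only real content is non-redundancy.

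For non-redundancy I would argue as follows. Because $\nu(A)$ is linearly independent of cardinality $r$ and $\nu(P) \in \spa{\{\nu(O_i),\nu(Q_i)\}}$ with $\nu(P)$ not on $\nu(O_i)$ or $\nu(Q_i)$ individually, the set $\nu(S)$ spans the same $r$-dimensional space $\Lambda := \spa{\nu(A)}$ and any $r$ of its $r+1$ points that still span $\Lambda$ give a genuine decomposition; the ones that fail to span all of $\Lambda$ span a hyperplane of $\Lambda$. Concretely, $\nu(S) \setminus \{\nu(O_i)\}$ and $\nu(S)\setminus\{\nu(Q_i)\}$ each span $\Lambda$ (they each contain $\nu(A\setminus\{P\})$ together with one of $\nu(O_i),\nu(Q_i)$, and $\spa{\nu(A\setminus\{P\}),\nu(O_i)} = \spa{\nu(A\setminus\{P\}),\nu(Q_i)} = \Lambda$ since $\nu(P)$ lies in this span but not in $\spa{\nu(A\setminus\{P\})}$ by linear independence of $\nu(A)$), while for each $R \in A\setminus\{P\}$ the set $\nu(S)\setminus\{\nu(R)\}$ spans only the hyperplane $\spa{\nu(S)\setminus\{\nu(R)\}}$ of $\Lambda$, because removing $R$ from the linearly independent-modulo-one-relation configuration drops the rank by one. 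Thus the proper subsets of $S$ whose span contains $\Lambda$ are exactly $S\setminus\{O_i\}$ and $S\setminus\{Q_i\}$; a general $T\in\Lambda$ avoids the finitely many proper linear subspaces $\spa{\nu(S')}$ for the remaining $S'\subsetneq S$, and for $S' = S\setminus\{O_i\}$ or $S\setminus\{Q_i\}$ a general $T$ does lie in $\spa{\nu(S')}=\Lambda$ but then also $T\notin\spa{\nu(S'')}$ for every $S''\subsetneq S'$ — and here one uses genericity of $T$ together with the fact that each such $S'$ has the linearly independent image $\nu(S')$ of cardinality $r$, so its proper subsets span proper subspaces. Combining these, for general $T\in\Lambda$ no proper subset of $S$ has $\nu$-span containing $T$, i.e. $S$ is non-redundant.

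The main obstacle I expect is bookkeeping the genericity of $T$ correctly: one must check that the "bad locus" of $T$'s for which $S$ fails to be non-redundant is a finite union of proper subspaces of $\Lambda$, so that its complement is nonempty (indeed dense). The cleanest way is to enumerate the subsets $S'\subsetneq S$ and classify $\spa{\nu(S')}$ as either a proper subspace of $\Lambda$ (excluded outright) or all of $\Lambda$ (only the two subsets $S\setminus\{O_i\}$, $S\setminus\{Q_i\}$), and in the latter case push the non-membership requirement down to the proper subsets of $S'$, which again span proper subspaces of $\Lambda$ by linear independence of $\nu(S')$. Since $S$ is finite, this is a finite union of proper subspaces of $\Lambda$, so a general $T$ avoids all of them; a minor point worth stating explicitly is that the choice of the line $\langle b_i,c_i\rangle$ through $p_i$ can be made so that $O_i,Q_i\notin A\setminus\{P\}$, which is possible because $n_i>0$ gives infinitely many such lines.
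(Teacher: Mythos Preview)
Your overall strategy matches the paper's: replace one point $P\in A$ by two points $O_i,Q_i$ on a line through $P$ in one factor, as in Example~\ref{add9}. However, the linear-algebraic analysis of non-redundancy contains a genuine error.

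The claim that $\spa{\nu(S)}=\Lambda:=\spa{\nu(A)}$ is unjustified and typically false. From $\nu(P)\in\spa{\nu(O_i),\nu(Q_i)}$ you only get $\spa{\nu(S)}\supseteq\Lambda$, not equality. Relatedly, the assertion that ``$\nu(P)$ lies in $\spa{\nu(A\setminus\{P\}),\nu(O_i)}$'' is simply wrong: $\nu(P)$ is a combination of $\nu(O_i)$ and $\nu(Q_i)$, and nothing forces $\nu(Q_i)$ into that span. Worse, your construction can fail outright. If $\nu(O_i)\in\Lambda$ (hence also $\nu(Q_i)\in\Lambda$), then $\nu(S)$ consists of $r+1$ points in the $(r-1)$-dimensional space $\Lambda$, so there is a nontrivial relation among them; removing any point that appears with nonzero coefficient yields $r$ independent points still spanning $\Lambda$. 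That proper subset $S'\subsetneq S$ then satisfies $\spa{\nu(S')}=\Lambda\ni T$ for \emph{every} $T\in\Lambda$, so $S$ is redundant for all $T$, not just special ones. Thus no genericity argument on $T$ can rescue the construction in this case.

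This is precisely the dichotomy the paper's proof is built around. In case~(a) one arranges $\nu(O_1)\notin\Lambda$; then $\nu(S)$ is in fact linearly \emph{independent} (of cardinality $r+1$), and non-redundancy for general $T\in\Lambda$ follows by checking each $r$-element subset. Case~(a) fails exactly when the entire fiber $\nu(\PP^{n_i}\times\{p_j\}_{j\ne i})$ lies in $\Lambda$; the paper's case~(b) then replaces $P$ by a general $O_i$ in that fiber (which keeps $\spa{\nu(A)}$ unchanged) and tries the next factor, terminating because $\sharp A\le M$ prevents $\Lambda$ from containing all of $\nu(Y)$. Your argument is missing both the hypothesis $\nu(O_i)\notin\Lambda$ in the main case and the iterative fix when that hypothesis cannot be met.
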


\begin{proof}
Fix $P=(p_1,\dots ,p_k)\in A$ and take $Q_1, O_1$ as in Example \ref{add9}, with $i=1$,
and with the additional condition that $O_1, Q_1\notin (A\setminus \{P\})$.
 We may take $O_1$ to be a general point of $\PP^{n_1}\times \{p_2\}\times \cdots \times \{p_k\}$.
Hence we may take $O_1 = (a_1,p_2,\dots ,p_k)$, with $a_1$ general.
Set $A':= A\setminus \{P\}$.

\quad (a) Assume $\nu (O_1)\notin \spa{\nu (A)}$. This is always possible unless
$\spa{\nu(A)}$ contains $\PP^{n_1}\times\{p_2\}\times\dots\times\{p_k\}$.
Since $P\in A$, this is equivalent to $\nu (Q_1)\notin \spa{ \nu (A)}$.
Set $S:= (A\setminus \{P\})\cup \{O_1,Q_1\}$.
We have $\sharp (S) =r+1$ and $\spa{ \nu (S)} \supseteq \spa{\nu (A)}$ so that $T\in \spa{\nu (S)}$.
The set $S$ does not depend on $Q_1$, but only on $A$ and $P$. To prove that
$S$ satisfies the claim,  it is sufficient to prove that for a general $T\in  \spa{\nu (A)}$ there is no $S'\subsetneq S$ with $T\in  \spa{\nu (S')}$.
It is sufficient to test all subsets of $S$ with cardinality $r$. Take $S'\subset S$ with $\sharp (S') =r$.
If $S'\supset A'$, i.e. if either $S' =A'\cup \{O_1\}$ or $S' = A'\cup \{Q_1\}$,
then $\spa{\nu (S')} \cap \spa{\nu (A)} = \spa{\nu (A')}$, because $\{\nu (O_1),\nu (Q_1)\}\cap \spa{\nu (A)} =\emptyset$. Thus
$T$ cannot stay in $\spa{\nu (S')}$ because it cannot stay in $\spa{\nu (A')}$  for a proper subset $A'$ of $A$.
Hence $S'\nsupseteq A'$. Since $\sharp (S') = r$, we have $S' = A''\cup \{O_1,Q_1\}$ with $A''\subset A'$ and $\sharp (A'') =r-2$.
If $T\in \spa{\nu (S')}$, then $\spa{\nu (S')} \cap \spa{\nu (A)} \supseteq \spa{\nu (A'')\cup \{T\}}$. Since $\nu (O_1)\notin  \spa{\nu (A)}$,
we get:
$$\spa{\nu (S')} \cap \spa{\nu (A)} = \spa{\nu (A'')\cup \{T\}}.$$
The left hand side of this equality does not depends on the choice of $T$. Varying $T\in \spa{\nu (A)} \setminus \spa{\nu (A')}$
we get $\spa{\nu (S')} \supseteq \spa{\nu (A)}$. Since $\sharp (S') =\sharp (A)$ and $\nu (A)$ is linearly independent,
we get  $\spa{\nu (S')} = \spa{\nu (A)}$ and hence $\nu (O_1)\in\spa{\nu (A)}$, a contradiction.

\quad (b) If $n_j=0$ for all $j>1$ we are done. Assume for
instance that  $n_2>0$ and  that $\spa{\nu(A)} \supset
\PP^{n_1}\times\{p_2\}\times\dots\times\{p_k\}$, so that we cannot
take $\nu (O_1)\notin \spa{\nu (A)}$. Since $\nu ({P}) \in \spa{
\{\nu (O_1),\nu (Q_1)\}}$, we get $\spa{\nu (\{O_1\}\cup A')}
=\spa{\nu (A)}$.  Replace thus $A$ with $A'\cup \{O_1\}$ and $P$
by $O_1$. Then take the construction of \refex{add9} with $i=2$.
If the new general point $O_2$ satisfies $\nu (O_2)\notin \spa{\nu
(A)}$, then we conclude as in step (a). Otherwise we have
$\spa{\nu(A)} \supset
\PP^{n_1}\times\PP^{n_2}\times\{p_3\}\times\dots\times\{p_k\}$
Then continue  with $i=3$ in the construction of Example
\ref{add9}, and so on. After at most $k$ steps we must conclude by
step (a), because $\spa{A}$ cannot contain the whole product
${Y}$, since $\sharp (A)\le M$.
\end{proof}

{
\begin{remark}\label{add11.01}
Take $P =(p_1,\dots ,p_k)$, $i\in \{1,\dots ,k\}$ with $n_i>0$ and
$b_i,c_i$ as in Example \ref{add9}. Notice that in the previous
proof we can choose for $c_i$ any point (different from $b_i$) in
the line spanned by $b_i,p_i$. Thus, in Theorem \ref{add11} we get
a positive-dimensional family of sets $S\subset Y$ such that
$\sharp (S) =r+1$, $T\in \spa{\nu(S)}$ and $T\notin \spa{\nu(S')}$
for any $S'\subsetneq S$.

\end{remark}}

We end this section with a discussion on some consequence of \refthm{add7} on symmetric tensors.
\smallskip

Assume $n:= n_1=\dots = n_k$ so that $Y$ is a product of $k$ copies of $\PP^n$.
The Segre map restricted to the diagonal $\Delta\subset Y$ can be identified with the {\it Veronese map}
$v_k: \PP^n\to \PP^D$ where $D+1=\binom{k+n}n$. The space $\PP^D$ 
parameterizes symmetric tensors $T$, which in turn can be identified with homogeneous polynomials (forms)
of degree $k$ in $n+1$ variables. The {\it symmetric rank} is the minimum $r$ for which there exists
a finite subset $A\subset\PP^n$ of cardinality $r$ with $T\in\spa{v_k(A)}$.

A conjecture raised in \cite{CGLM} and well known  as {\it Comon's Conjecture} predicts
that the symmetric rank of a symmetric tensor $T$ always coincides with the rank of $T$ as a normal tensor
in the span of $\nu(Y)$.

The following corollary of \refthm{add7} implies  that, if  some assumptions on a minimal symmetric decomposition $A$ of a symmetric tensor $T$ are satisfied,  that the symmetric rank of $T$ coincides with its rank (and cactus rank).

\begin{corollary}\label{comon}
With the previous notation, consider  a zero-dimensional scheme $A\subset\PP^n$ of degree $\deg(A)=c+1$.
Call $\Jj_A$ the ideal sheaf of $A$ in $\PP^n$ and assume that $h^1(\Jj_A(e))=0$ for some $e\leq k/2$.
Take $T\in \spa{v_k(A)}$ such that $T\notin \spa{v_k(A')}$ for any $A'\subsetneq A$.
 Then $T$ has cactus rank equal to $c+1$. If $A$ is reduced
 (i.e. it is a finite set of points), then $T$ has also rank $c+1$.
\end{corollary}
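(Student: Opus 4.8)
The plan is to deduce Corollary \ref{comon} from \refthm{add7} by passing from the Veronese picture on $\PP^n$ to the Segre picture on the $k$-fold product $Y=(\PP^n)^k$. The key observation is that a symmetric decomposition $A\subset\PP^n$ (or a zero-dimensional scheme $A$ on $\PP^n$) sits on the diagonal $\Delta\subset Y$ via $p\mapsto(p,\dots,p)$, and under the Segre map $\nu$ this diagonal embedding coincides with the Veronese map $v_k$. So if we let $\widetilde A\subset Y$ be the image of $A$ under the diagonal embedding (a zero-dimensional scheme of the same degree $c+1$, reduced iff $A$ is), then $v_k(A)$ and $\nu(\widetilde A)$ span the same linear space, $T\in\spa{v_k(A)}=\spa{\nu(\widetilde A)}$, and $T\notin\spa{\nu(\widetilde A')}$ for any $\widetilde A'\subsetneq\widetilde A$.

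First I would fix the partition of $\{1,\dots,k\}$ needed to apply \refthm{add7}. Choose $F$ to be a subset of size $e$ and $E$ its complement of size $k-e$; since $e\le k/2$ we have $0<e\le k-e<k$. The next step is the cohomological translation: I claim $h^1(\Ii_{\widetilde A}(E))=0$ and $h^0(\Ii_{\widetilde A}(F))=0$, both coming from the hypothesis $h^1(\Jj_A(e))=0$. Indeed, for a subset $\mathbf u$ of size $m$, the line bundle $\Oo(\mathbf u)$ restricted to the diagonal $\Delta\cong\PP^n$ is $\Oo_{\PP^n}(m)$, so $\pi_{\mathbf u}(\widetilde A)$ lies in a Veronese $v_m(\PP^n)$ and the span of $\nu(\pi_{\mathbf u}(\widetilde A))$ has codimension $h^0(\Jj_A(m))$; thus $h^0(\Ii_{\widetilde A}(\mathbf u))=h^0(\Jj_A(m))$ and, since $\deg(\widetilde A)=c+1$, one gets $h^1(\Ii_{\widetilde A}(\mathbf u))=0 \iff h^1(\Jj_A(m))=0$. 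Applying this with $m=k-e\ge e$ and using the standard fact that $h^1(\Jj_A(e))=0$ forces $h^1(\Jj_A(m))=0$ for all $m\ge e$ (which is the $\PP^n$-analogue of the Castelnuovo-type remark already noted in the preliminaries), we get $h^1(\Ii_{\widetilde A}(E))=0$. Similarly, $h^1(\Jj_A(e))=0$ together with $\deg(A)=c+1$ does not by itself give $h^0(\Jj_A(e))=0$; but $h^0(\Ii_{\widetilde A}(F))=h^0(\Jj_A(e))$ and I can pick $c$ so that $M_F=(n+1)^e$ and use $h^0(\Jj_A(e))<M_F-c$ — this last inequality is where I must be careful, and I should state the hypothesis so that it holds, or rather observe that the relevant input is exactly $h^1(\Jj_A(e))=0$ on a scheme of degree $c+1\le h^0(\Oo_{\PP^n}(e))$, which gives $h^0(\Jj_A(e))=h^0(\Oo_{\PP^n}(e))-(c+1)$, and since $h^0(\Oo_{\PP^n}(e))\le (n+1)^e=M_F$ we obtain $h^0(\Ii_{\widetilde A}(F))\le M_F-c-1<M_F-c$.

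With these verified, \refthm{add7} (equivalently \refcor{add7cor}, or \refcor{alessandra} when also $h^1(\Ii_{\widetilde A}(F))=0$) applies: there is no zero-dimensional $B\subset Y$ with $\deg(B)\le c$ and $T\in\spa{\nu(B)}$, so the cactus rank of $T$ as a tensor in $\spa{\nu(Y)}$ is at least $c+1$; since $\widetilde A$ itself realizes $T$ with degree $c+1$, the cactus rank is exactly $c+1$. When $A$ is reduced, $\widetilde A$ is a reduced set of $c+1$ points and one must additionally argue that this is a minimal (honest) decomposition, i.e. that $\rank(T)=c+1$; this follows because $\nu(\widetilde A)$ is linearly independent (its span has the expected dimension $c$, as $h^1(\Ii_{\widetilde A}(1))=0$ is implied by $h^1(\Ii_{\widetilde A}(E))=0$), so $T\in\spa{\nu(\widetilde A)}$ exhibits a decomposition of size $c+1$, and no smaller decomposition exists because $\rank(T)\ge$ cactus rank of $T = c+1$. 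Finally, since the symmetric rank is by definition computed by finite subsets of $\PP^n$, and $A$ provides one of size $c+1$ while the ordinary rank is already $c+1$, the two coincide, yielding Comon's equality for $T$.

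The main obstacle I expect is the bookkeeping in the cohomological dictionary — specifically making precise that $\Oo(\mathbf u)|_\Delta=\Oo_{\PP^n}(|\mathbf u|)$ and that the span of the Segre-projected diagonal scheme has codimension governed by $h^0(\Jj_A(|\mathbf u|))$, together with pinning down the inequality $h^0(\Ii_{\widetilde A}(F))<M_F-c$ from $h^1(\Jj_A(e))=0$ (this needs $c+1\le h^0(\Oo_{\PP^n}(e))$, which is forced by $h^1(\Jj_A(e))=0$ on a length-$(c+1)$ scheme). The rest — choosing the partition with $|F|=e$, invoking \refthm{add7}, and transporting the conclusion back through $v_k$ — is routine once the dictionary is set up.
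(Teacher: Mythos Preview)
Your approach is essentially the paper's: embed $A$ in the diagonal of $Y=(\PP^n)^k$, translate the Veronese cohomology into Segre cohomology, and invoke \refthm{add7}. The paper takes $|E|=e$ and $|F|=k-e$, while you swap them; both work, since one needs the vanishing of $h^1(\Jj_A(\cdot))$ at both sizes $e$ and $k-e$, and the larger one follows from the smaller.

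One correction to your dictionary: the identity $h^0(\Ii_{\widetilde A}(\mathbf u))=h^0(\Jj_A(m))$ (with $m=|\mathbf u|$) is not right. The span of $\nu(\pi_{\mathbf u}(\widetilde A))$ lies inside the span of the Veronese $v_m(\PP^n)$, which is only a $\PP^{\binom{n+m}{n}-1}$ inside $\PP^{M_{\mathbf u}-1}$; the correct formula is $h^0(\Ii_{\widetilde A}(\mathbf u))=M_{\mathbf u}-\binom{n+m}{n}+h^0(\Jj_A(m))$. Fortunately your conclusion $h^0(\Ii_{\widetilde A}(F))<M_F-c$ survives: since $h^1(\Jj_A(e))=0$ gives $h^0(\Jj_A(e))=\binom{n+e}{n}-(c+1)$, the corrected formula yields $h^0(\Ii_{\widetilde A}(F))=M_F-(c+1)<M_F-c$ directly. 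Equivalently, as you note parenthetically, once both $h^1(\Ii_{\widetilde A}(E))=0$ and $h^1(\Ii_{\widetilde A}(F))=0$ are in hand you can simply cite \refcor{alessandra}.
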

\begin{proof} Consider any subset $E\subset\{1,\dots,k\}$ of cardinality  $e$ and take $F=\{1,\dots,k\}\setminus E$.
Notice that $f:=\sharp F\geq e$, so that also $h^1(\Jj_A(f))=0$.
This implies that, considering $A$ as a subset of $\Delta\subset
Y$,  in the notation of Theorem \ref{add7}, we have $h^1(\Ii
_A(E)) =0$ and $h^0(\Ii _A(F)) < M_F -c$. The claim follows from
\refthm{add7}.
\end{proof}

 We show that \refcor{comon} provides new evidences for the Comon's Conjecture,
 sometimes even in a numerical range larger  than the ones considered in previous works on the topic
 (e.g. \cite{Fri} and \cite{ZHQ}).

\begin{remark}\label{numer}
Assume $k=2e$ even. Then by \refcor{comon}, the Comon's conjecture holds for symmetric tensors $T$ having
a minimal symmetric decomposition $A$ with $h^1(\Jj_A(e))=0$.

The condition $h^1(\Jj_A(e))=0$ holds for general subsets $A\subset\PP^n$, as soon as $c+1=\sharp A$ satisfies
\begin{equation}
                  c+1\leq r_0:=\binom{n+e}e.
\end{equation}
So Comon's Conjecture holds for general tensors whose symmetric rank $c+1$ is bounded by $r_0$.

When $k=2e+1$ is odd, a similar conclusion holds with $r_0:=1+ \binom{n+e}e$.
\end{remark}

\begin{example}
After the Alexander-Hirshowitz classification of defective Veronese varieties (\cite{AH}), a general symmetric tensor
in the span of $v_k(\PP^n)$ is known to have symmetric rank
$$ r_g=\lceil \frac{\binom {n+k}k}{n+1}\rceil $$
except for a list of few exceptional cases.

In general $r_g$ is bigger than our bound $r_0$ of \refrem{comon}, since, for fixed $e$, $r_g$ grows asymptotically as
$(2^ne^n)/(n+1)!$ while $r_0$ grows like $e^n/n!$.

Nevertheless, there are cases in which $r_g$ and $r_0$ coincide. This happens for $(k,n)=(6,2)$ or $(k,n)=(8,2)$.
Since these two cases are not in the list of exceptional Veronese varieties, we can conclude that Comon's Conjecture
holds for general forms of degree $6$ and degree $8$ in $3$ variables.
\end{example}

\begin{example} Take $(k,n)=(4,3)$. Then \refrem{numer} tells us that the Comon's conjecture holds for
general symmetric tensors in the span of $v_4(\PP^3)$ with a decomposition with $10$ summands. 
On the other hand, in this case the number $r_g$ is $9$, smaller than $r_0$.

Indeed  $(k,n)=(4,3)$ is in the list of exceptional cases in the
Alexander-Hirscho-witz theorem, so that a general form of degree
$4$ in $\PP^3$ has symmetric rank $10>r_g$. Thus Comon's
conjecture also holds for such general forms.

In some sense, \refthm{add7} provides a new heuristic reason why
the case $(k,n)=(4,3)$ is exceptional: a general tensor with a non-redundant decomposition with $10$ summands
cannot have a decomposition with $9$ summands.
A similar remark holds for the other exceptional cases of even degree: quadrics in any $\PP^n$ and
quartics in $\PP^2$ and $\PP^4$.
\end{example}

\section{Identifiability}\label{Section:Identifiability}

In order to get results on the identifiability of a tensor $T$, we
need to refine the previous analysis, and we are going to do that
in this section.

We will need the following terminology for the {\it Segre function}
of a finite subset of the product, introduced in \cite{CS16}.
\medskip

\begin{definition}
For any set of points $S\subset Y$, the {\em Segre function}
$\SF_S:\{1,\dots,k\}\to\NN$ is defined by:
\begin{center}
$\SF_S(i)=1+$ the dimension of the linear span of $\nu(\pi_{\{1,\dots,i\}}(S))$.
\end{center}
\end{definition}

Remark that the knowledge of the sequence
$h^0(\Ii _S (\{1,\dots ,i\}))$, $i=1,\dots ,k$, is equivalent to the knowledge
of the Segre function $\SF_S$.

More precisely, the definition of Segre function depends on the ordering of the factors of the product.
The knowledge of $h^0(\Ii_S(\mathbf{u}))$, for all possible $\mathbf{u}\subset\{1,\dots,k\}$, is equivalent to the knowledge
of the Segre functions of $S$ under all  possible re-arrangements of the factors.
\medskip

Let us recall the following definition for a minimal dependent set
of point (the same can be found in \cite[Definition 2.9]{CS16},
while {{where}} in \cite[Chap.7 Sec.1]{GKZ}  a
minimal dependent set of points is called a {\it circuit}.).

\begin{definition}
A {set} of points $S\subset\PP^m$ is {\emph {minimally
dependent}},
if $S$ is linearly dependent, but any proper subset { of
$S$}
is linearly independent.
\end{definition}

We need now a list of results on the cohomology of $\Ii_S$, for finite sets $S$ in a product of projective spaces.

\begin{definition}
We say that a finite subset $S\subset {Y}$ is {\em
degenerate} if there exists an index $i$ such that $\pi_i(S)$ does
not span $\PP^{n_i}$, i.e. there is a hyperplane $H\subset
\PP^{n_i}$ such that
$$S\subset  \PP^{n_1}\times\dots\times \PP^{n_{i-1}}\times H\times \PP^{n_{i+1}}\times\dots\times\PP^{n_k}.$$
We say that $T\in\PP^M$ is {\em degenerate} if there exists a
degenerate subset $A\subset \nu({Y})$ such that
$T\in\spa{A}$.
\end{definition}

Notice that if $S\subset {Y}$ is non-degenerate, then
necessarily $\sharp S>\max\{n_i\}$.
\smallskip

The two results below are mainly an extension to the non-symmetric case of results
in \cite{BGL} and \cite{bb}.

\begin{lemma}\label{ee2}
Fix a finite set $S\subset
 {Y=\mathbb{P}^{n_1}\times \cdots \times
\mathbb{P}^{n_k}}$ such that $x:= \sharp (S)\le k+1$, $h^1(\Ii
_S(1))
>0$ and $h^1(\Ii _{S'}(1)) =0$ for each $S'\subsetneq S$. Then
there is $E\subset  \{1,\dots ,k\}$ such that $\sharp (E) =k+2-x$
and $\sharp (\pi _i(S)) =1$ for all $i\in E$.
\end{lemma}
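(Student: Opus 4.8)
The plan is to argue by induction on $x = \sharp(S)$, with the key geometric input being that a minimally dependent set in a Segre variety is very constrained once $x \le k+1$. First I would dispose of the base case $x = 1$: a single point is never linearly dependent, so the hypothesis $h^1(\Ii_S(1)) > 0$ is vacuous (or forces $x \ge 2$), and indeed for $x = 2$ two distinct points of $X$ are linearly independent, so one actually has $x \ge 3$; I would instead start the induction at the smallest genuinely possible $x$. The main structural observation is that since $h^1(\Ii_{S'}(1)) = 0$ for every proper $S' \subsetneq S$ but $h^1(\Ii_S(1)) > 0$, the set $\nu(S)$ is minimally dependent in $\PP^M$ in the sense of the definition above; in particular $\nu(S)$ spans a $\PP^{x-2}$ and every $x-1$ of its points are a basis of that span.

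Next I would extract information from a single projection. Pick any index $i \in \{1,\dots,k\}$ and consider $\pi_i(S) \subset \PP^{n_i}$. If $\sharp(\pi_i(S)) \ge 2$ for \emph{every} $i$, I want to derive a contradiction with $x \le k+1$; conversely if $\sharp(\pi_i(S)) = 1$ for some $i$, I can "remove" that factor and induct. Concretely, suppose $\sharp(\pi_i(S)) = 1$ for some $i$, say $i = k$ after reordering. Then $S$ lies in $\PP^{n_1} \times \cdots \times \PP^{n_{k-1}} \times \{pt\}$, and the Segre embedding of $S$ factors through the Segre of the first $k-1$ factors: $\nu(S)$ and $\nu'(\pi_{\{1,\dots,k-1\}}(S))$ have the same linear span behaviour, so the hypotheses of the Lemma hold for $S' := \pi_{\{1,\dots,k-1\}}(S)$ as a subset of $Y' = \PP^{n_1}\times\cdots\times\PP^{n_{k-1}}$ with the same $x$ but $k$ replaced by $k-1$; wait — here $x \le k+1$ need not give $x \le k$, so I must first handle the boundary. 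The cleanest route is: if for some $i$ we have $\sharp(\pi_i(S)) = 1$, record that index, replace $Y$ by the product of the remaining factors, and repeat; each such step drops $k$ by $1$ and leaves $x$ fixed. This terminates with a set $\bar S$ in a product $\bar Y$ of $\bar k$ factors, still minimally dependent, with $\sharp(\pi_j(\bar S)) \ge 2$ for every remaining factor $j$, and the claim reduces to showing $\bar k = x - 2$ — i.e. that after removing the "constant" factors exactly $x-2$ factors remain, equivalently that $S$ has constant projection on exactly $k - (x-2) = k+2-x$ factors.

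So the crux is the case where $\sharp(\pi_j(S)) \ge 2$ for all $j$, where I must show $k \le x - 2$, and combined with $x \le k+1$ this pins down $k = x-2$ (hence the original non-constant factors number exactly $x-2$ and $\sharp(E) = k+2-x$ as claimed). For this I would use the lower-bound part of Proposition \ref{bb} / the residue machinery: writing $S = \{P_1,\dots,P_x\}$, for each factor $j$ split $S$ according to the fibers of $\pi_j$; since $\nu(S)$ is minimally dependent, a dimension count on the Segre — comparing $h^1(\Ii_S(1))$ computed via the flattening into factor $j$ versus the rest, using the exact sequence \refeqn{resid} with $D$ a pullback of a hyperplane separating the $\pi_j$-fibers — forces each non-constant projection to contribute at least one to a strictly increasing chain, giving $k$ strictly increasing proper subsets inside $S$ and hence $k \le x - 2$. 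The main obstacle I anticipate is making this last counting argument precise: controlling how $h^1(\Ii_S(1))$ redistributes under the flattening in each coordinate direction simultaneously, and ruling out "collisions" where two different factors give the same partition of $S$. I expect one can organize this by ordering the factors so that $\SF_S$ is strictly increasing on the non-constant ones — using that $\SF_S(i) - \SF_S(i-1) \ge 1$ whenever $\pi_i(S)$ is non-constant and $S$ is in "linearly general enough" position from minimality — and reading off that $x - 1 = \dim\spa{\nu(S)} + 1 \ge \bar k + 1$, i.e. $\bar k \le x - 2$, with equality forced by $x \le k+1$.
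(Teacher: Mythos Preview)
Your global plan --- strip off the factors on which $S$ projects to a single point, and then show that the number $\bar k$ of remaining factors satisfies $\bar k\le x-2$ --- is correct and is essentially a reorganisation of the paper's induction (the paper instead finds one constant factor at a time and recurses). The gap is precisely where you flag it yourself: the assertion that $\SF_S(i)-\SF_S(i-1)\ge 1$ whenever $\pi_i(S)$ is non-constant. You do not prove this, and ``residue sequence'', ``flattening'' and ``linearly general enough from minimality'' are not an argument. That implication is the entire content of the lemma; everything else is bookkeeping. (A small side remark: after stripping you no longer know $x\le \bar k+1$, so your line ``combined with $x\le k+1$ this pins down $k=x-2$'' is not right; fortunately you only need $\bar k\le x-2$, not equality.)

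The paper establishes the contrapositive of your key claim as follows. Assuming $\sharp(\pi_1(S))\ge 2$ and taking the \emph{minimal} $i\ge 2$ with $\SF_S(i)=\SF_S(i-1)$, it invokes \cite[Proposition~2.5]{CS16}: every subset $S'\subseteq S$ that is minimally dependent with respect to $\Oo(\{1,\dots,i\})$ has $\sharp(\pi_i(S'))=1$. Here lies a subtlety you do not address: $S$ is minimally dependent only for the full line bundle $\Oo(1)$; with respect to the partial bundle $\Oo(\{1,\dots,i\})$ proper subsets of $S$ may already be dependent, so the cited proposition does \emph{not} apply directly to $S$ itself. The paper therefore adds a short but genuine argument: using that $\pi_1$ is non-constant, it shows that any two points $P,Q\in S$ can be placed in a common minimally dependent subset for $\Oo(\{1,\dots,i\})$ (possibly via a third point $C$ with a different first coordinate), and hence share the same $i$-th coordinate. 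Without this ingredient --- or an independent proof of the monotonicity of $\SF_S$ that you assert --- your argument does not close.
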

\begin{proof}
The lemma is trivial if $x=2$, because $\Oo _Y(1)$ is very ample and so the assumptions
of the lemma are never satisfied in this case. Thus we may assume $x>2$ and use induction on the integer $x$.

The case $x=3$ is also true, because $\nu({Y})$ is cut out
by quadrics and each line contained in $\nu({Y})$ is the
image by $\nu$ of a line in one of the $k$ factors of
${Y}$.

Thus we assume $x>3$ (and so $k\ge 3$).
\\
Assume $\sharp (\pi _1(S))\ge 2$, so that the Segre function of $S$ satisfies $\SF_S(1) \ge 2$. Notice that
the assumptions on $S$ imply that $\SF_S(k)=x-1$.
Take points  $P=(a_1,\dots ,a_k)$, $Q=(b_1,\dots ,b_k)\in S$ with $P\ne Q$.
Since $h^1(\Ii _{S'}(1)) =0$ for all $S'\subsetneq S$ and $x>3$,
we may find $A\supset \{P,Q\}$ with $\sharp (A) =x-1$ and $h^1(\Ii _A(1))=0$.
Since $x\le k+1$, there is a minimal integer $i\in \{2,\dots ,k\}$ such that $\SF_S(i-1) =\SF_S(i)$.
By \cite[Proposition 2.5]{CS16} for every minimally dependent $S'\subseteq S$ with respect to the line
bundle $\Oo(\{1,\dots,i\})$ we have $\sharp (\pi _i(S')) =1$. Since $i\ge 2$ and $P\ne Q$, we have
$h^1(\Ii _{\{P,Q\}}(\{1,\dots,i\}))=0$. Hence we may find a
minimally dependent set containing $\{P,Q\}$. Thus $a_i=b_i$. Take any $C=(c_1,\dots ,c_k)\in
S\setminus \{P,Q\}$. If $c_1\ne a_1$ we may take minimally dependent $S''\subseteq S$ with respect to the line
bundle $\Oo(\{1,\dots,i-1\})$ containing $\{P,C\}$ and hence $c_i=a_i$.
If $c_1= a_1$ we may take minimally dependent $S''\subseteq S$ with respect to the line
bundle $\Oo(\{1,\dots,i-1\})$ containing $\{Q,C\}$ and hence $c_i=b_i =a_i$. Thus $\pi _i(S) =\{a_i\}$.
The lemma is now proved when $x=k+1$, by setting $E=\{i\}$ .

If $x<k+1$, we apply the same
proof to the projection $\pi_{\mathbf{u}}(X)$ where $\mathbf{u}=\{1,\dots,k\}\setminus \{i\}$
and conclude by descending induction on the integer $k+1-x$.
\end{proof}

\begin{lemma}\label{ee3} Fix a finite set $S\subset {Y}$ such that $x:= \sharp (S)\le k+n_1$,
$\langle \pi _1(S)\rangle=\PP^{n_1}$, $h^1(\Ii _S(1)) >0$ and
$h^1(\Ii _{S'}(1)) =0$ for each $S'\subsetneq S$. Then there is
$E\subset  \{2,\dots ,k\}$ such that $\sharp (E) =k+2-x$ and
$\sharp (\pi _i(S)) =1$ for all $i\in E$.
\end{lemma}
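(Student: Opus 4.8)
The plan is to mimic the induction in the proof of \reflem{ee2}, recalibrated to the new numerology. The only change in the input data is that the Segre function now starts higher: since $\langle\pi_1(S)\rangle=\PP^{n_1}$ and $\nu$ restricts to the identity on the first factor, $\SF_S(1)=n_1+1$, whereas the hypotheses $h^1(\Ii_S(1))>0$ and $h^1(\Ii_{S'}(1))=0$ for every $S'\subsetneq S$ say precisely that $\nu(S)$ is a minimal circuit, so $\SF_S(k)=\sharp S-1=x-1$. As $\SF_S$ is non-decreasing across the $k-1$ steps from index $1$ to index $k$, the number of indices $i\in\{2,\dots,k\}$ with $\SF_S(i-1)=\SF_S(i)$ is at least $(k-1)-\bigl((x-1)-(n_1+1)\bigr)=k+n_1+1-x$, which is $\ge k+2-x$ exactly because $n_1\ge 1$; in particular such a ``stall'' index $i\ge 2$ exists. (If $k+2-x\le 0$ there is nothing to prove, taking $E=\emptyset$.)

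Next I would fix the minimal stall index $i$ and show $\sharp(\pi_i(S))=1$, copying the mechanism of \reflem{ee2}: by \cite[Proposition 2.5]{CS16} every $S'\subseteq S$ that is minimally dependent for $\Oo(\{1,\dots,i\})$ has $\sharp(\pi_i(S'))=1$; since $\langle\pi_1(S)\rangle=\PP^{n_1}$ with $n_1\ge 1$ forces $\sharp(\pi_1(S))\ge 2$, one may choose $P,Q\in S$ with distinct first coordinates, so that $h^1(\Ii_{\{P,Q\}}(\{1,\dots,i\}))=0$, while $\SF_S(i)\le x-1<x$ makes $S$ itself dependent for $\Oo(\{1,\dots,i\})$; hence $\{P,Q\}$ lies in a minimally dependent subset and $\pi_i(P)=\pi_i(Q)$, and running the same argument through the pairs $\{P,C\}$ or $\{Q,C\}$ for each remaining $C\in S$ (at least one of these pairs still having distinct first coordinates) propagates the equality to all of $S$. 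This middle step is the real obstacle: everything hinges on being able to promote the single-point property from minimally dependent subsets to the whole of $S$, and that is exactly where the circuit structure of $\nu(S)$ together with the non-degeneracy of $\pi_1(S)$ (enough pairs with distinct first coordinates to ``connect'' $S$ through circuits) must be used, just as in \reflem{ee2}.

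Finally I would conclude by projecting away the $i$-th factor and inducting on $k+2-x$. Writing $\mathbf{u}=\{1,\dots,k\}\setminus\{i\}$: since $h^1(\Ii_{\{R_1,R_2\}}(1))=0$ for distinct $R_1,R_2\in S$, the map $\pi_{\mathbf{u}}$ is injective on $S$, and since $\pi_i(S)$ is a single point the linear span of $\nu(S)$ is unchanged under $\pi_{\mathbf{u}}$; hence $\pi_{\mathbf{u}}(S)\subset Y_{\mathbf{u}}$ is again a minimal circuit of cardinality $x$ with $\langle\pi_1(\pi_{\mathbf{u}}(S))\rangle=\PP^{n_1}$ (as $1\in\mathbf{u}$). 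The base case $k+2-x\le 1$ is handled either by $E=\emptyset$ or by the single index $i$ produced above; and for $k+2-x\ge 2$ one has $x\le k\le (k-1)+n_1$, so the inductive hypothesis applies to $\pi_{\mathbf{u}}(S)$ in the $(k-1)$-factor subproduct $Y_{\mathbf{u}}$ and yields $E'\subset\{2,\dots,k\}\setminus\{i\}$ with $\sharp E'=(k-1)+2-x$ and $\pi_j(S)$ a single point for all $j\in E'$, whence $E:=E'\cup\{i\}$ has all the required properties.
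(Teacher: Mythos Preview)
Your proof is correct and follows essentially the same route as the paper's. The paper's own proof is a one-liner---choose $n_1+1$ points of $S$ whose first coordinates span $\PP^{n_1}$ and then ``just repeat the proof of \reflem{ee2}''---and what you have written is precisely that repetition spelled out, with the starting value of the Segre function upgraded from $\SF_S(1)\ge 2$ to $\SF_S(1)=n_1+1$ and the descending induction on the number of factors made explicit.
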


\begin{proof}
Take $n_1+1$ points $A_1,\dots ,A_{n_i+1}$, say $A_j = (a_{j,1},\dots ,a_{j,k})$, $1\le j \le n_1+1$,
with the property that the set $\{a_{1,1},\dots ,a_{n_1+1,1}\}$ spans $\PP^{n_1}$.
Then just repeat the proof of \reflem{ee2}.
\end{proof}

Now, with the aid of \refprop{bb}, we are ready to prove the
following theorem where we explicit  a sufficient condition for a
non-redundant decomposition to be minimal and unique.

\begin{theorem}\label{ee4} Fix an element $T\in \mathbb{P}^M$ and a non-redundant decomposition $S\subset Y$ of $T$ and let $\sharp(S)=r$.
If $m:= \max \{n_1,\dots ,n_k\}$,
then
\begin{enumerate}[(a)]
\item\label{a} If $2r\le k+m$, then the rank of $T$ is $r$.

\item\label{b} If moreover $2r<k+m$, then $\Ss (T) =\{S\}$, i.e. $T$ is identifiable.
\end{enumerate}
\end{theorem}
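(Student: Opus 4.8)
The plan is to derive both statements from \refthm{add7} (equivalently \refcor{alessandra}) combined with the combinatorial Lemmas \ref{ee2} and \ref{ee3}. The key point is that if $S$ is non-redundant of cardinality $r$ but \emph{not} minimal, then there is a smaller decomposition $B$, and $A\cup B$ (with $A=S$) carries positive $h^1$; choosing $A'\cup B$ minimally with this property produces a subset $\Sigma\subseteq A\cup B$ with $h^1(\Ii_\Sigma(1))>0$ but $h^1(\Ii_{\Sigma'}(1))=0$ for every proper subset, and $\sharp\Sigma$ is controlled by $r$. The inequality $2r\le k+m$ (or $2r<k+m$) is exactly what forces $\sharp\Sigma$ into the range $\le k+1$ or $\le k+n_1$ where Lemmas \ref{ee2}/\ref{ee3} apply, and those lemmas say that such a $\Sigma$ must have many factors on which it projects to a single point. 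After reordering the factors so that $n_1=m$ is the largest, one partitions $\{1,\dots,k\}$ into $E$ (the "collapsed" indices coming from the lemma, together with enough room) and $F$, and checks that the resulting cohomological hypotheses of \refthm{add7} are met by $A=S$, giving a contradiction with the existence of $B$.

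For part (\ref{a}): suppose $\rank(T)<r$, so there is $B$ with $\deg(B)=\sharp B\le r-1$ and $T\in\spa{\nu(B)}$. Since $S$ is non-redundant and $\sharp B<\sharp S$ we get $B\nsubseteq S$, and by \refprop{bb} $h^1(\Ii_{S\cup B}(1))>0$. Pick $\Sigma\subseteq S\cup B$ minimal (w.r.t.\ inclusion) with $h^1(\Ii_\Sigma(1))>0$; then every proper subset of $\Sigma$ has vanishing $h^1$, and $2\le\sharp\Sigma\le \sharp S+\sharp B\le 2r-1\le k+m-1< k+n_1+1$ after putting the largest factor first. Lemma \ref{ee3} (or \ref{ee2} if $\Sigma$ happens to be degenerate in the first factor, handled by permuting to a factor it \emph{does} span, or directly by \ref{ee2} when $\sharp\Sigma\le k+1$) then yields a set $E$ of indices with $\sharp E = k+2-\sharp\Sigma$ on which $\Sigma$ — hence also $S$, after possibly enlarging — projects to a point. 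The strategy is then to feed this into \refthm{add7}: on $E$ the scheme $S$ spans a tiny space so $h^1(\Ii_S(E))=0$ is forced by the size bound, while on $F$ the non-redundancy plus the count gives $h^0(\Ii_S(F))<M_F-\sharp B$, contradicting the existence of $B$ with $\deg B\le \sharp B$. For part (\ref{b}): if $2r<k+m$ and $S'\in\Ss(T)$ with $S'\ne S$, then $\sharp S'=\sharp S=r=\rank(T)$ by part (\ref{a}), $S'\nsubseteq S$, and the same argument applied to $B=S'$ gives $h^1(\Ii_{S\cup S'}(1))>0$ with $\sharp(S\cup S')\le 2r-1<k+m$; one extracts a minimally $h^1$-positive $\Sigma$ of size $\le 2r-1$ and runs the identical contradiction, the strict inequality being what buys the extra slack needed to rule out $S'$ rather than merely bound the rank.

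The main obstacle I expect is bookkeeping around the partition $E\sqcup F$: one must verify that after reordering so $n_1=m$, the index set $E$ produced by Lemma \ref{ee2}/\ref{ee3} is large enough that $M_F=\prod_{i\in F}(n_i+1)$ exceeds $\sharp B$ (so the divisor-choosing step in \refthm{add7}'s proof is available), \emph{and} simultaneously $E$ is small enough — equivalently $\deg S$ small enough relative to $\prod_{i\in E}(n_i+1)$ — that $h^1(\Ii_S(E))=0$ holds. The tension is that Lemma \ref{ee3} gives $\sharp E=k+2-\sharp\Sigma$, which shrinks as $\Sigma$ grows, so one wants $\Sigma$ small; but $\Sigma$ could be as large as $2r-1$. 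Reconciling these requires carefully using that $S$ itself (not just $\Sigma$) collapses on those $E$-indices, together with $\sharp S=r$ and the inequality $2r\le k+m$; getting the exact arithmetic to close, and treating the boundary case $2r=k+m$ in part (\ref{a}) versus the strict case in part (\ref{b}), is where the real work lies. A secondary subtlety is the case where $\Sigma$ is degenerate in some factor it should span — there one first projects away that factor, reducing $k$, and argues by descending induction exactly as in the proof of Lemma \ref{ee2}.
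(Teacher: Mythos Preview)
Your approach diverges from the paper's in a significant way, and the divergence introduces a genuine gap.

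The paper does \emph{not} invoke \refthm{add7} at all. Its argument is a direct cardinality bound: after reordering so $n_1=m$, take a minimal $\tilde S\subseteq S\cup S'$ \emph{containing $S$} with $h^1(\Ii_{\tilde S}(1))>0$; then Lemma~\ref{ee3} (used in contrapositive form, together with the fact that a collapsed index for $\tilde S$ would also collapse $S\subseteq\tilde S$) forces $\sharp\tilde S\ge k+n_1+1=k+m+1$. Hence $r+\sharp S'\ge\sharp(S\cup S')\ge\sharp\tilde S\ge k+m+1$, and the inequalities $2r\le k+m$ (resp.\ $2r<k+m$) finish parts (\ref{a}) and (\ref{b}) immediately. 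No partition $E\sqcup F$, no Theorem~\ref{add7}.

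Your route through \refthm{add7} stalls exactly at the point you flag as ``where the real work lies''. Theorem~\ref{add7} requires $h^1(\Ii_S(E))=0$ for the partition you build, but the lemmas give you collapsed indices for $\Sigma$, not for $S$. You write that one must use ``that $S$ itself (not just $\Sigma$) collapses on those $E$-indices'', yet with your choice of $\Sigma$ (minimal among \emph{all} subsets of $S\cup B$) there is no containment $S\subseteq\Sigma$, so there is simply no mechanism to transfer collapsing from $\Sigma$ to $S$. A minimal $\Sigma$ could consist mostly of points of $B$; then knowing $\pi_i(\Sigma)$ is a single point tells you nothing about $\pi_i(S)$, and $h^1(\Ii_S(E))=0$ is unjustified. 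The paper avoids this precisely by insisting $\tilde S\supseteq S$, so that any collapsed factor of $\tilde S$ is automatically a collapsed factor of $S$ --- and then, rather than feeding this into \refthm{add7}, it uses the collapse (or its absence) only to extract the lower bound $\sharp\tilde S\ge k+m+1$.

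In short: drop the detour through \refthm{add7}; take your minimal $h^1$-positive set to \emph{contain} $S$, and use Lemma~\ref{ee3} solely to bound its cardinality from below. The arithmetic then closes in one line.
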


\begin{proof}
By permuting the factors of ${Y}$ we may assume that
$n_1=m$. The assumptions on $T$ imply in particular that $\nu (S)$
is linearly independent, i.e. $h^1(\Ii _S(1)) =0$. Let $S'\subset
Y$ be a minimal decomposition of $T$ such that $\sharp S' \leq r$
and $S'\neq S$.
Then, by \refprop{bb}, $h^1(\Ii _{S\cup S'}(1)) >0$.

Let $\tilde S\subseteq S\cup S'$ be a minimal subset of $S\cup S'$ containing $S$
and with $h^1(\Ii _{\tilde S}(1)) >0$. Since $\tilde S\supseteq S$, and $X$ is non-degenerate,
the set $\pi _1(\tilde S)$ spans $\PP^{n_1}$.
By Lemma \ref{ee3} we have $\sharp (\tilde S)\ge k+n_i+1$ and hence $\sharp (S')+r \ge k+m+1$.

If $2r=k+m$ we get $\sharp (S')\ge r$. This proves that $T$ has rank $r$.
If $2r<k+m$ we get a contradiction.
\end{proof}

\begin{corollary}\label{ee5}
Set $m:= \max \{n_1,\dots ,n_k\}$. Take a non-degenerate $T\in \PP^M$.
If $2\mathrm{rk}(T)<k+m$,  
then $T$ is identifiable.
\end{corollary}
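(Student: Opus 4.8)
The plan is to deduce this statement from Theorem \ref{ee4} by producing a non-redundant decomposition of $T$ of the right size and invoking part \refeqn{b}. First I would set $r = \mathrm{rk}(T)$ and pick any minimal decomposition $S_0 \subset Y$ of $T$, so that $\sharp(S_0) = r$ and $\nu(S_0)$ is linearly independent; a minimal decomposition is automatically non-redundant, since if $T \in \spa{\nu(S')}$ for some $S' \subsetneq S_0$ then $S'$ would be a smaller decomposition, contradicting minimality. Thus $S_0$ is a non-redundant decomposition of $T$ with $\sharp(S_0) = r = \mathrm{rk}(T)$, and the hypothesis $2\,\mathrm{rk}(T) < k + m$ is exactly the hypothesis $2r < k+m$ of Theorem \ref{ee4}\refeqn{b} applied to $S = S_0$.

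Next I would apply Theorem \ref{ee4}\refeqn{b} directly to $T$ and $S_0$: it yields $\Ss(T) = \{S_0\}$, i.e. $T$ is identifiable. Note that the non-degeneracy hypothesis on $T$ is not strictly needed to run this argument — it is Theorem \ref{ee4} that does the work, and that theorem only needs a non-redundant decomposition together with the numerical inequality — but including it is harmless and matches the framing of the section (it guarantees, for instance, that $\nu(S_0)$ genuinely spans enough of the ambient space and that $m$ is the relevant parameter rather than being artificially inflated by a degenerate embedding). So the corollary is essentially a restatement of Theorem \ref{ee4}\refeqn{b} once one observes that ``minimal'' implies ``non-redundant.''

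The only point requiring a moment's care — and the nearest thing to an obstacle — is the passage from ``$T$ has a minimal decomposition'' to ``that minimal decomposition is non-redundant in the sense of Definition \ref{Definition:NonRedundant}'': one must check that minimality of $\sharp(S_0)$ forces $T \notin \spa{\nu(S')}$ for every proper subset $S' \subsetneq S_0$, which is immediate since such an $S'$ has strictly smaller cardinality and would contradict $\sharp(S_0) = \min\{\sharp S' : T \in \spa{\nu(S')}\}$. After that, everything is a direct citation of Theorem \ref{ee4}. I expect the whole proof to be two or three sentences long.
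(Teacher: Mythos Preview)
Your approach is the same as the paper's: pick a minimal decomposition $S\in\Ss(T)$, observe it is non-redundant, and invoke \refthm{ee4}\refeqn{b}. The paper's proof is essentially the two lines you predicted.

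One correction, though: your side remark that ``the non-degeneracy hypothesis on $T$ is not strictly needed'' is wrong. Although the \emph{statement} of \refthm{ee4} omits any non-degeneracy assumption, its proof uses it: to apply \reflem{ee3} one needs $\langle\pi_1(\tilde S)\rangle=\PP^{n_1}$, and to derive the contradiction $\sharp(\tilde S)\ge k+m+1$ one needs to exclude the conclusion of \reflem{ee3} (that some $\pi_i(\tilde S)$ collapses to a point). Both steps fail for degenerate $T$. Indeed \refthm{ee4}\refeqn{b} is false as literally stated: take $k=2$, $n_1$ large, $n_2=2$, and let $T$ correspond to a rank-$2$ matrix supported on a $2\times 3$ block; then $2r=4<k+m$ but $T$ has infinitely many rank-$2$ decompositions. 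This is exactly why the paper's proof of \refcor{ee5} inserts the sentence about the minimal multiprojective space containing $S$: it is using the non-degeneracy of $T$ to ensure that $S$ itself is non-degenerate, which is what the proof of \refthm{ee4} actually requires. So keep the hypothesis and use it, rather than labeling it harmless decoration.
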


\begin{proof}
Take $S\in \Ss (T)$. Since $S\subset Y$ and $T\in \spa{\nu(S)}$,
then $X$ {chi e' X?} is the minimal multiprojective space
containing $S$. Then apply part (\ref{b}) of \refthm{ee4}.
\end{proof}

\begin{remark}\label{compare} One cannot give a result on the identifiability of tensors without comparing
it with the celebrated Kruskal's bound (\cite{Kru}), which is known to be sharp (\cite{sb}, \cite{Der}).

Our condition on the decomposition $S$ of the tensor $T$ is weaker than the condition
imposed by Kruskal, which requires to compute the span of any subset of $\pi_i(S)$
up to cardinality $n_i+1$ (in order to determine the Kruskal's rank), while we only need
to check that $\pi_i(S)$ generates $\PP^{n_i}$.
Since the Kruskal's rank of the projections of $S$ in principle can be even $1$ (when $\pi_i$ is
not injective), for low values of the rank our result determines the identifiability of $T$
under weaker assumptions.

Of course, as our assumptions are weaker than Kruskal's ones, we cannot give
applications outside  Kruskal's numerical range. There are few cases in which
the numerical range of application of our result matches with Kruskal's range.
One of them e.g. is given by tensors of type $3\times 2 \times 2 \times 2$.
\end{remark}

Next, we show that under some condition on the decomposition $S$
of a tensor $T$, we can prove that any other decomposition $S'$ of cardinality
smaller or equal than $\sharp S$ must have projections in special position.

\begin{definition}
A zero-dimensional scheme $Z\subset {Y}$ is said to be
\emph{curvilinear} if each connected component of $Z$ has
embedding dimension $1$, i.e. there exists a smooth curve in
${Y}$ containing $Z$.
\end{definition}

We will indeed prove the result even when $S'$ is non-reduced, provided
that $S'$ is {\it curvilinear}.

Notice that, of course, any {\it reduced} finite subset of $Y$ is
curvilinear.

\begin{theorem}\label{add6}
Set $m':= \min \{n_1,\dots ,n_k\}$. Fix integers $s>0$, $0<x<k$
such that $(m'+1)^{k-x} \ge r$. Let $B\subset Y$ be
zero-dimensional curvilinear scheme  and $S\subset Y$ be a finite
set with different coordinates such that $\sharp (S)=r$, $\deg (B)
=x$  and $h^1(\Ii _S (\mathbf{u})) =0$ for any subset
$\mathbf{u}\subset\{1,\dots ,k\}$ of cardinality $k-x$.

Assume that each projection $\pi_i$ is an isomorphism when restricted to $B$
(when $B$ is reduced this is equivalent to say that also $B$ has different coordinates).

Then,  $h^1(\Ii _{S\cup B}(1)) =0$.
\end{theorem}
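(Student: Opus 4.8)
The plan is to argue by contradiction: suppose $h^1(\Ii_{S\cup B}(1))>0$, and extract a minimal ``bad'' subscheme to which the Lemmas of this section apply. First I would pass to a minimal subscheme $W\subseteq S\cup B$ with $h^1(\Ii_W(1))>0$ and $h^1(\Ii_{W'}(1))=0$ for every $W'\subsetneq W$; such a $W$ exists because $h^1(\Ii_\emptyset(1))=0$. Write $W = S_0\cup B_0$ where $S_0 = W\cap S$ and $B_0$ is the part of $W$ coming from $B$ (so $B_0$ is curvilinear of degree $\le x$). The hypothesis $h^1(\Ii_S(\mathbf{u}))=0$ for all $\mathbf{u}$ of cardinality $k-x$ forces $W$ to genuinely involve $B$: if $W\subseteq S$ we would contradict $h^1(\Ii_S(1))=0$ (which follows from $h^1(\Ii_S(\mathbf u))=0$ by the monotonicity remark in the preliminaries, taking $\mathbf u$ of size $k-x<k$), so $\deg(B_0)\ge 1$.

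The key mechanism is residuation along coordinate divisors, exactly as in the proof of \refthm{add7}. Since $B_0$ is curvilinear of degree $d:=\deg(B_0)\le x<k$, and each $\pi_i$ restricted to $B$ is an isomorphism, I can choose, one factor at a time, divisors $D_{i}\in|\Oo(\{i\})|$ pulled back from hyperplanes in the $i$-th factor, for $i$ ranging over a suitable set $\mathbf{v}\subset\{1,\dots,k\}$ of cardinality $d$, so that $B_0\subset D_{i_1}\cup\cdots\cup D_{i_d}$ is cut down step by step. After removing these $d$ divisors one obtains $\Res(W)\subseteq S_0\subseteq S$, living inside the projection to the complementary $k-d\ge k-x$ factors $\mathbf{u}$; iterating the residual exact sequence \refeqn{resid} and using $h^1(\Ii_S(\mathbf{u}))=0$ for every $\mathbf u$ of size $k-x$ (hence, by monotonicity, for every larger $\mathbf u$) would show that each residuation step cannot carry positive $h^1$, contradicting $h^1(\Ii_W(1))>0$. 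The condition $(m'+1)^{k-x}\ge r$ guarantees that along the way the relevant projected ambient spaces are large enough to accommodate the projections of $S$, i.e. that $h^0(\Ii_{S}(\mathbf u))>0$ fails to obstruct choosing the divisors — more precisely it ensures $\deg(S)=r$ is at most the affine dimension $(m'+1)^{k-x}$ of the smallest relevant Segre flattening, so the needed vanishing is consistent.

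The main obstacle I anticipate is bookkeeping the residuation when $B_0$ is \emph{non-reduced} curvilinear: a single divisor $D_i$ may meet a length-$d$ curvilinear component only in a subscheme of length $1$ (since $D_i$ is a general hyperplane pullback and the component lies on a smooth curve mapping isomorphically to the $i$-th factor), so one genuinely needs $d$ successive residuations to strip it off, and at each stage one must verify that the ``bad'' $h^1$ propagates to the residue via \refeqn{resid} while the ambient linear span behaves as predicted. Controlling that $\Res_{D_{i_1}\cup\cdots}(W)$ stays inside $S$ — so that the terminal contradiction with $h^1(\Ii_S(\mathbf u))=0$ is reached — is the crux; the ``different coordinates'' hypothesis on $S$ and the isomorphism hypothesis on $B$ are precisely what make the general-hyperplane choices avoid the points of $S\cup B$ in unwanted ways. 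Once the residuation tower is set up correctly, the conclusion is immediate. One may alternatively phrase the inductive step directly via Lemmas \ref{ee2} and \ref{ee3} applied to the projections $\pi_{\mathbf u}(W)$, but the residual-sequence argument is the cleaner route.
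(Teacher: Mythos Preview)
Your outline is on the right track and parallel to the paper's argument: both proceed by contradiction and strip off $B$ by residuation along divisors $D\in|\Oo(\{i\})|$, landing in a subscheme of $S$ whose $h^1$ with respect to the remaining twist vanishes by hypothesis. The paper packages this as an induction on $k$ (one residuation per step, reducing to a product with $k-1$ factors and a curvilinear scheme of degree $x-1$), which is essentially the same as your iterated residuation. Two minor points: the passage to a minimal $W\subseteq S\cup B$ is a detour the paper does not take (it works with $Z=S\cup B$ after reducing to $S\cap B=\emptyset$), and the alternative via Lemmas \ref{ee2}--\ref{ee3} would not apply, since those require $\sharp W\le k+1$ or $\sharp W\le k+n_1$, bounds that need not hold here.

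The genuine gap is hidden in the phrase ``for $i$ ranging over a suitable set $\mathbf{v}$''. Choosing the factor index is the entire content of the proof. At each step you must guarantee two things: that $D_i\cap W$ imposes independent conditions in the current twist (so the residual sequence forces $h^1>0$ on the residue), and that the projection to the remaining factors is still an embedding on the residue (so the iteration can continue). The obstruction is a point $P\in B_{\red}$ and a point $Q\in S$ that agree in every coordinate except the $i$-th; then $P,Q$ both lie in $D_i$ but are not separated by $\Oo(\mathbf u\setminus\{i\})$. The paper handles this by a pigeonhole: it shows (using $k\ge 3$, $S\cap B=\emptyset$, and the different-coordinates hypothesis) that for each $P\in B_{\red}$ there is at most one such bad index $i$; since $\sharp B_{\red}\le x<k$, some index $h$ is free of bad pairs, and one residuates there. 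Without supplying this argument (or an equivalent), your proposal does not establish that a ``suitable'' $\mathbf v$ exists, and the $h^1$-propagation you flag as the crux remains unproved.
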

\begin{proof} Set $Z:= S\cup B$ and assume $h^1(\Ii _Z(1)) >0$.
Taking $S\setminus (B\cap S)$ instead of $S$ we reduce to the case
$S\cap B =\emptyset$. Fix $\mathbf{u}=\{2,\dots,k\}$ so that
$\pi_{\mathbf{u}}$ is the projection to the last $k-1$ factors and
write $Y_{\mathbf{u}}=\PP^{n_2}\times\dots\times\PP^{n_k}$

\quad (a) First assume $k=2$ and hence $x=1$, ${\mathbf{u}}=\{2\}$ and $\sharp (S)\le m+1$.
Write $B = \{O\}$ with $O =(O_1,O_2)$. Take a general hyperplane $H\subset \PP^{n_1}$ such that
$O_1\in H_1$ and set $W_1:= H_1\times \PP^{n_2}$. Since $H_1$ is a general hyperplane containing $O_1$
and $S$ has different coordinates, either $W_1\cap S =\emptyset$ or $W_1\cap S$
is the unique point whose image by $\pi _1$ is $O_1$. Hence $\sharp (W_1\cap Z)\le 2$.

Since $\Oo(1)$ is very ample, then $h^1(\Ii _{W_1\cap Z}(1)) =0$,
so $h^1(W_1,\Ii _{W_1\cap Z,W_1}(1)) =0$. The residual exact
sequence of $W_1$ in ${Y}$ gives $h^1(\Ii _{Z\setminus
(Z\cap W_1)}({\mathbf{u}}))>0$. Since $Z\setminus (Z\cap
W_1)\subseteq S$, we get $h^1(\Ii _S({\mathbf{u}})) >0$, a
contradiction.

\quad (b) Now assume $k\ge 3$ and assume that the claim holds for multiprojective spaces with $k-1$ factors.
Write $\{P(1),\dots ,P(y)\}$ for the points of the reduced set $B_{\red}$, where $1\le y \le x$.
For any $i\in \{1,\dots ,k\}$ let $\Aa _i$ be
the set of all pairs $(P({\mathbf{u}}),Q({\mathbf{u}}))\in B_{\red}\times S$ such that all the coordinates of $P({\mathbf{u}})$ and $Q({\mathbf{u}})$
are the same, except the $i$-th one (which is different, because $S\cap B=\emptyset$).
Assume the existence of $(P({\mathbf{u}}),Q({\mathbf{v}}))\in \Aa _i$ and $(P({\mathbf{u}}),Q({\mathbf{v}}))\in \Aa _j$.
Since $S\cap B=\emptyset$, $Q({\mathbf{u}})$ (resp. $Q({\mathbf{v}})$) has all coordinates equal to the one of $P({\mathbf{u}})$,
except the $i$-th (resp. $j$-th) one, which is different.
Since $k\ge 3$ and $S$ has different coordinates, we get $Q({\mathbf{u}}) =Q({\mathbf{v}})$.
Hence $i=j$. Since $y\le x<k$, there is $h\in \{1,\dots ,k\}$ with $\Aa _h=\emptyset$.
Permuting the factors of $Y$ we may assume $h=1$, i.e. $\Aa _1 =\emptyset$.
This is equivalent to the injectivity of the map $\pi _{1|S\cup B_{\red}}$.
Since $\pi _{1|B}$ is an embedding, we get that $\pi_{1|Z}$ is an embedding.

Fix $P\in B_{\red}$ and call $P_1,\dots ,P_k$ its components. Take
a general hyperplane $H_1$ of $\PP^{n_1}$ containing $P_1$. Since
$\pi _1(B)$ is curvilinear and $H_1$ is general, we have
$\pi_1(B)\cap H_1 =\{P_1\}$ (scheme-theoretic intersection). Set
$W_1:= H_1\times {Y}_{\mathbf{u}}$. $W_1$ is an element of
$|\Oo(\{1\})|$. Since $\pi_{1|B}$ is an embedding and
$\pi_1(B)\cap H_1 =\{P_1\}$ (scheme-theoretic intersection), we
have $B\cap W_1=\{P\}$ (scheme-theoretic intersection). Since
$H_1$ is general and $S$ has different coordinates, either
$W_1\cap S = \emptyset$ or $W_1\cap S$ is the unique point of $S$
with $P_1$ as its first coordinate. Hence $Z_1:= Z\cap W_1$ is
always reduced, it contains $P$ and at most another point, which
is in $S$. Set $Z_{\mathbf{u}}:= \Res _{W_1}(Z)$, $B_1:= \Res
_{W_1}(B)$ and $S_1:= \Res _{W_1}(S) =S\setminus (S\cap W_1)$. We
have $Z_{\mathbf{u}} =B_1\cup S_1$, $\deg (B_1)=x-1$, $B_1\subset
B$, $S_1\subseteq S$. Since $\sharp (Z_1)\le 2$ and $\Oo(1)$ is
very ample, we have $h^1(\Ii _{Z_1}(1)) =0$. Hence $h^1(W_1,\Ii
_{Z_1}(1)) =0$. The residual exact sequence of $W_1$ in $X$ gives
$h^1(\Ii _{Z_{\mathbf{u}}}({\mathbf{u}})) >0$. Since
$\pi_{\mathbf{u}}{1|Z}$, is an embedding, then
$\pi_{{\mathbf{u}}|Z_{\mathbf{u}}}$ is an embedding. Hence
$h^1(\Ii _{Z_{\mathbf{u}}}({\mathbf{u}}))=
h^1({Y}_{\mathbf{u}},\Ii _{Z_{\mathbf{u}}}(1))$. The
inductive assumption on the number of factors of the
multiprojective space gives $h^1({Y}_{\mathbf{u}},\Ii
_{Z_{\mathbf{u}}}(1))=0$, a contradiction.
\end{proof}

Mixing the previous theorem with \refprop{bb}, we get the following.

\begin{corollary} Set $m':= \min \{n_1,\dots ,n_k\}$ and
fix integers $s>0$, $0<x<k$ such that $(m'+1)^{k-x} \ge r$. Take a
tensor $T\in \PP^M$ with a non-redundant decomposition $S\subset
Y$ such that $\sharp S=r$ and $S$ has different coordinates.
Assume $h^1(\Ii _S ({\mathbf{u}})) =0$ for any subset
${\mathbf{u}}\subset\{1,\dots ,k\}$ of cardinality $k-x$.

Then any other non-redundant decomposition $S'\subset X$ of $T$ of
cardinality $\leq x$ cannot have different coordinates.
\end{corollary}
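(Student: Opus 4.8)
The plan is to combine Theorem~\ref{add6} with Proposition~\ref{bb} in essentially the same way that earlier corollaries are derived from Theorem~\ref{add7}. Suppose, for contradiction, that $T$ admits a non-redundant decomposition $S'\subset Y$ with $\sharp S'\le x$ and such that $S'$ has different coordinates. Since $S'$ is a non-redundant decomposition, $\nu(S')$ is linearly independent, so $h^1(\Ii_{S'}(1))=0$; similarly $h^1(\Ii_S(1))=0$ because $S$ is non-redundant. Both $T\in\spa{\nu(S)}$ and $T\in\spa{\nu(S')}$, so $\spa{\nu(S)}\cap\spa{\nu(S')}\ne\emptyset$, and hence by Proposition~\ref{bb} we get $h^1(\Ii_{S\cup S'}(1))>0$ (using that $S\ne S'$, so one of them is not contained in the other; in fact if $S=S'$ there is nothing to prove, and if $S'\subsetneq S$ the non-redundancy of $S$ is contradicted, so we may assume $S'\not\subseteq S$).

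On the other hand, I want to apply Theorem~\ref{add6} with $B:=S'$. The hypotheses of that theorem ask for a curvilinear zero-dimensional scheme $B$ with $\deg(B)=x$, with each $\pi_i$ restricted to $B$ an isomorphism, and with $(m'+1)^{k-x}\ge r$ together with $h^1(\Ii_S(\mathbf{u}))=0$ for every $\mathbf{u}$ of cardinality $k-x$. The last three conditions are exactly the standing hypotheses of the corollary. For $B=S'$: since $S'$ is reduced it is automatically curvilinear, and the assumption that $S'$ has different coordinates says precisely that each $\pi_{i|S'}$ is an embedding. The only mismatch is that Theorem~\ref{add6} is stated for $\deg(B)=x$ exactly, whereas here $\sharp S'\le x$. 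This is harmless: if $\sharp S'<x$, one enlarges $S'$ to a set $\widetilde S'\supseteq S'$ of cardinality exactly $x$ still having different coordinates (possible since the $n_i$ are positive and general points have different coordinates), and one still has $h^1(\Ii_{S\cup\widetilde S'}(1))\ge h^1(\Ii_{S\cup S'}(1))>0$ is \emph{not} what we want --- rather, we apply Theorem~\ref{add6} to $\widetilde S'$ to conclude $h^1(\Ii_{S\cup\widetilde S'}(1))=0$, and then monotonicity ($S\cup S'\subseteq S\cup\widetilde S'$ gives $h^1(\Ii_{S\cup S'}(1))\le h^1(\Ii_{S\cup\widetilde S'}(1))=0$ since for zero-dimensional schemes $h^1$ does not increase when passing to a subscheme) yields $h^1(\Ii_{S\cup S'}(1))=0$. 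Alternatively, and more cleanly, one checks directly that Theorem~\ref{add6} applies verbatim with $x$ replaced by $\sharp S'$ because the numerical hypothesis $(m'+1)^{k-x}\ge r$ only gets easier and $h^1(\Ii_S(\mathbf{u}))=0$ for $\mathbf{u}$ of cardinality $k-x$ implies the same for larger $\mathbf{u}$, hence for cardinality $k-\sharp S'$.

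Thus Theorem~\ref{add6} gives $h^1(\Ii_{S\cup S'}(1))=0$, contradicting the inequality $h^1(\Ii_{S\cup S'}(1))>0$ obtained from Proposition~\ref{bb}. Therefore no such $S'$ with different coordinates can exist, which is the assertion.

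The main obstacle is bookkeeping rather than a genuine difficulty: one must make sure the cardinality condition $\sharp S'\le x$ is reconciled with the exact-degree hypothesis $\deg(B)=x$ of Theorem~\ref{add6}, and one must carefully verify that $S\not\subseteq S'$ and $S'\not\subseteq S$ so that Proposition~\ref{bb} is applied in a situation where $h^1(\Ii_{S\cup S'}(1))$ is genuinely forced to be positive (this is where non-redundancy of $S$ and $S'$ together with $S\ne S'$ enters; the case $S=S'$ is trivial and the case of strict inclusion contradicts non-redundancy). A secondary point worth a line is the monotonicity of $h^1$ under enlarging a zero-dimensional scheme when twisting by the very ample $\Oo(1)$, which justifies the reduction; this follows from the structure sheaf sequence of the inclusion, or simply from the fact that $h^1(\Ii_Z(1))$ measures the failure of points of $Z$ to impose independent conditions and adding points can only increase the span's expected codimension deficit --- but I would just cite the residual/inclusion exact sequence as elsewhere in the paper.
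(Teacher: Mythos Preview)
Your approach is exactly the one the paper intends: the paper's own ``proof'' is the single sentence ``Mixing the previous theorem with \refprop{bb}, we get the following,'' and you have correctly unpacked this into (i) using non-redundancy of $S$ and $S'$ plus \refprop{bb} to force $h^1(\Ii_{S\cup S'}(1))>0$, and (ii) applying \refthm{add6} with $B=S'$ to force $h^1(\Ii_{S\cup S'}(1))=0$. Your handling of the mismatch $\sharp S'\le x$ versus $\deg(B)=x$ is fine (either route works; the monotonicity of $h^1$ under passing to a subscheme is standard), and your case analysis ruling out $S'\subseteq S$ or $S\subseteq S'$ via non-redundancy is the right way to ensure $T\notin\spa{\nu(S\cap S')}$ so that \refprop{bb} actually yields a strictly positive $h^1$.
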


The statement of \refthm{add7} cannot produce corollaries on the identifiability, because the assumptions
do not include the case $\sharp A=\sharp B$.

We can however modify the proof of \refthm{add7}, adding some assumptions on the decomposition $S$,
which produces results which bound different the decompositions of a tensor $T$.

\begin{definition}\label{ee6}
Let $W\subset \PP^r$ be an integral and non-degenerate projective variety. A finite set (resp. zero-dimensional scheme)
$S\subset W$ is said to be \emph{set-theoretically quasi-general} (resp.  \emph{scheme-theoretically
quasi-general}) if the set-theoretic (resp. scheme-theoretic) intersection $W\cap \spa{S}$ is set-theoretically
(resp. scheme-theoretical) equal to $S$.
\end{definition}

\begin{lemma}\label{a1}
Let $W\subset \PP^M$ be an integral and non-degenerate variety of
dimension $n$. Fix a general reduced subset $S\subset  W$ with
$\sharp S \le M-n-1$. Then $S$ is the scheme-theoretic base locus
of the linear system on $Y$ { chi e' Y?} induced by
$H^0(\Ii _S(1))$, i.e $S$ is scheme-theoretically quasi-general.
\end{lemma}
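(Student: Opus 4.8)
\textbf{Proof proposal for Lemma \ref{a1}.}

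The plan is to argue that for a general choice of $S$, no point of $W$ outside $S$ can lie on every hyperplane through $S$, and moreover that the scheme-theoretic intersection $W\cap\spa{S}$ carries no embedded or infinitesimal structure beyond the reduced points of $S$. First I would dispose of the set-theoretic statement: the condition $\sharp S\le M-n-1$ guarantees $h^0(\Ii_S(1))\ge M+1-\sharp S\ge n+2$, so the linear system $|H^0(\Ii_S(1))|$ has projective dimension at least $n+1$; hence the locus of points of $W$ contained in $\spa{S}$ is cut on $W$ by a linear system of codimension at least $n+1$ in $\PP^M$. For a general $S$ an incidence-variety/dimension count shows this intersection $W\cap\spa{S}$ is exactly $S$ as a set: if it contained a further point $P$, then varying one point of $S$ while keeping $P\in\spa{S}$ would force $P$ to sweep out a positive-dimensional subvariety of $W$ lying in the span of the remaining points together with generically moving points, which is impossible for a general configuration since $\dim\spa{S}=\sharp S-1$ is too small.

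Next I would upgrade this to the scheme-theoretic assertion. Since $W$ is non-degenerate and $S$ is general, at each point $P_i\in S$ the tangent space $T_{P_i}W$ is not contained in $\spa{S}$ (again by a general position / dimension argument: the locus of $S$ for which $T_{P_i}W\subseteq\spa{S}$ is a proper closed condition, because $\dim T_{P_i}W=n$ while $\spa{S}$ is spanned by $P_i$ together with $\sharp S-1\le M-n-2$ other general points of $W$, so it cannot contain an $n$-plane through $P_i$ tangent to $W$). Therefore $W$ and $\spa{S}$ meet transversally at each $P_i$ in the sense that the intersection is reduced there, so the scheme $W\cap\spa{S}$ is reduced of dimension $0$ supported on $S$, i.e. equal to $S$. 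This is precisely the statement that $S$ is the scheme-theoretic base locus of the linear system induced by $H^0(\Ii_S(1))$, so $S$ is scheme-theoretically quasi-general in the sense of Definition \ref{ee6}.

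The main obstacle I anticipate is making the generality argument for the \emph{set-theoretic} part fully rigorous: one must rule out that $W\cap\spa{S}$ picks up a spurious component for \emph{every} choice of $S$ of the given cardinality, and the clean way to do this is via the incidence correspondence $\{(S,P): P\in W\cap\spa{S},\ P\notin S\}$, projecting to the parameter space $W^{\sharp S}$ and showing the image is not dense by computing that the fibers plus base are too small — concretely, fixing $P$ forces $S$ to lie in the divisorial-type condition $P\in\spa{S}$, which has codimension $M-\sharp S+1\ge n+1> \dim W$ in $W^{\sharp S}$ after accounting for the freedom of $P$. Once this dimension bookkeeping is done the transversality (scheme-theoretic) refinement is comparatively routine, using only that a general point of a non-degenerate variety has tangent space in general position relative to spans of other general points.
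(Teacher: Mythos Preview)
Your approach is genuinely different from the paper's. The paper never sets up an incidence correspondence. Instead it fixes a \emph{general} linear subspace $L\subset\PP^M$ of dimension $M-n-1$; by Bertini $L\cap W$ is a reduced finite set, and these $\deg(W)$ points are in linearly general position inside $L$. One then takes $S$ to be any subset of $L\cap W$ with $\sharp S\le M-n-1=\dim L$. Since $\spa{S}\subset L$, the scheme $W\cap\spa{S}$ is a subscheme of the reduced scheme $W\cap L$, hence is itself reduced; the general-position property of $L\cap W$ inside $L$ forces it to equal $S$ set-theoretically, hence scheme-theoretically. Finally one notes that a general $S\subset W$ of this cardinality arises in this way, because its span is a general $(\sharp S-1)$-plane and extends to a general $L$. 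What this route buys is that reducedness comes for free from Bertini, with no separate transversality check, and the set-theoretic step becomes an elementary statement about finitely many points in general position in a linear space.

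Your transversality argument for the scheme-theoretic upgrade is correct: for general $Q_1,\dots,Q_{s-1}\in W$ one has $\dim\spa{T_{P_i}W,Q_1,\dots,Q_{s-1}}=n+s-1$ (using only non-degeneracy of $W$ and $s\le M-n-1$), hence $T_{P_i}W\cap\spa{S}=\{P_i\}$. The gap is in the set-theoretic step. Your claim that, for fixed $P\in W$, the locus $\{S\in W^s:P\in\spa{S}\}$ has codimension $M-s+1$ in $W^s$ is not justified: that is the codimension in $(\PP^M)^s$, but upon restriction to $W^s$ the codimension can drop, because the planes $\spa{S}$ with $S\subset W$ are special secant planes, not general linear subspaces. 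The inequality you need is precisely that this codimension exceeds $n=\dim W$, but establishing that is essentially the content of the lemma for smaller $s$ (it amounts to knowing that $\spa{P,Q_1,\dots,Q_{s-1}}\cap W$ is finite for general $Q_j$). Your incidence-variety strategy can be completed, but only via an induction on $s$ or an appeal to uniform-position/trisecant-type results---at which point it is no more elementary than the paper's Bertini argument, and arguably less clean.
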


\begin{proof}
Let $L\subset \PP^M$ be a general linear space of dimension $M-r-1$. By Bertini's theorem the scheme $L\cap W$ is a finite set
of $\deg (Y)$ points, moreover the set $L\cap W$ is in linearly general position in $L$. Hence for any $S\subset L\cap W$ with
$\sharp S \le M-r-1$, the restriction
of $U:= H^0(\Ii _S(1))$ to $L\cap W$ has $S$ as its set-theoretic base locus. Since $L$ is a linear space, the restriction of $U$ to
$W$ has base locus contained in $L\cap W$. Thus $S$ is the base locus of the restriction of $U$ to $W$.
Since $W$ is integral and non-degenerate, a general subset $A\subset W$ with cardinality at most
$M-n$ spans a general subspace of $\PP^M$ with dimension $\sharp (A)-1$. The claim follows.
\end{proof}

\begin{proposition}\label{add14}
Fix a partition $ E\sqcup F=\{1,\dots ,k\} $ with
$E = \{a_1,\dots ,a_{n-h}\}$ and $F =\{b_1,\dots ,b_h\}$ for some $0<h<k$, and a positive integer $c< M_F=\prod _{i=1}^{h} (n_{b_i}+1)$.
Let $Z\subset Y$ be a zero-dimensional scheme such
that $\sharp (Z)=c$, $h^1(\Ii _Z(E)) =0$ and $h^1(\Ii _Z (F)) =0$. Assume that $\pi _F(Z)$ is
set-theoretically (resp. scheme-theoretically) quasi-general. Take any $T\in \spa{\nu(Z)}$ such
that $T\notin \spa{\nu(Z')}$ for any $Z'\subsetneq Z$.

If $S$ is a  finite set (resp. zero-dimensional scheme) such that $S\ne Z$, $\deg (S)\le b$ and
$T\in \spa{\nu(S)}$, then $\deg (S)=b$ and $\pi _F(S) =\pi _F(Z)$.
\end{proposition}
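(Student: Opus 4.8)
The plan is to follow the architecture of the proof of \refthm{add7}, but to exploit the quasi-generality of $\pi_F(Z)$ in place of the cruder inequality $h^0(\Ii_A(F))<M_F-c$ used there. First I would record the basic numerics: the hypothesis $h^1(\Ii_Z(E))=0$ gives $h^1(\Ii_Z(1))=0$, so $\nu(Z)$ is linearly independent and $T$ genuinely depends on all of $Z$; similarly $h^1(\Ii_Z(F))=0$ says $\nu(\pi_F(Z))$ is linearly independent in $\PP^{M_F-1}$, of the expected dimension $c-1$. Suppose now $S\ne Z$ is a zero-dimensional scheme (curvilinear/reduced in the set-theoretic version) with $\deg(S)\le c$ and $T\in\spa{\nu(S)}$. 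Since $T\notin\spa{\nu(Z')}$ for $Z'\subsetneq Z$ and $\deg(S)\le\deg(Z)$, we get $S\nsubseteq Z$, and by \refprop{bb} applied to $Z\cup S$, $h^1(\Ii_{Z\cup S}(1))>0$; moreover, exactly as in \refeqn{h1s}, $h^1(\Ii_{Z\cup S}(1))>h^1(\Ii_{Z'\cup S}(1))$ whenever $Z'\subseteq Z$ with $Z'\cup S\ne Z\cup S$.

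Next I would run the residuation argument of \refthm{add7}. Since $\deg(S)\le c<M_F$ we have $h^0(\Ii_S(F))>0$; pick a general divisor $D\in|\Ii_S(F)|$, i.e. the pullback under $\pi_F$ of a general hyperplane $H\subset\PP^{M_F-1}$ through $\nu(\pi_F(S))$. The new input is that $\pi_F(Z)$ is quasi-general: the scheme-theoretic (resp. set-theoretic) intersection of $\nu(\pi_F(Y))$ with $\spa{\nu(\pi_F(Z))}$ is exactly $\nu(\pi_F(Z))$. So unless $\nu(\pi_F(S))$ already spans a subspace of $\spa{\nu(\pi_F(Z))}$ forcing $\pi_F(S)\subseteq\pi_F(Z)$, a general hyperplane through $\nu(\pi_F(S))$ will fail to contain all of $\nu(\pi_F(Z))$, hence $Z\nsubseteq D$, hence $(D\cap Z)\cup S\subsetneq Z\cup S$. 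Then \refeqn{h1s} gives $h^1(\Ii_{(D\cap Z)\cup S}(1))<h^1(\Ii_{Z\cup S}(1))$, and the residual sequence \refeqn{resid} for $D$ yields $h^1(\Ii_{\Res_D(Z\cup S)}(E))>0$; but $\Res_D(Z\cup S)\subseteq Z$ (since $S\subset D$ and $D\cap Z$ is removed), contradicting $h^1(\Ii_Z(E))=0$. This contradiction shows we must be in the excluded case, i.e. $\pi_F(S)\subseteq\pi_F(Z)$; combined with $\deg(S)\le c=\deg(Z)$ and the fact (from $h^1(\Ii_Z(F))=0$) that $\pi_F|_Z$ is an embedding with $\deg(\pi_F(Z))=c$, we conclude $\deg(S)=c$ and $\pi_F(S)=\pi_F(Z)$.

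The main obstacle is making the "excluded case" dichotomy precise in the scheme-theoretic setting: I must argue that if a general $D\in|\Ii_S(F)|$ contains $Z$, then $\spa{\nu(\pi_F(S))}$ meets $\nu(\pi_F(Y))$ (scheme-theoretically) in something containing $\nu(\pi_F(Z))$, so that quasi-generality of $\pi_F(Z)$ forces the reverse containment $\pi_F(Z)\subseteq\pi_F(S)$ — and then symmetry of degrees closes the loop. The delicate point is that a single general hyperplane through $\nu(\pi_F(S))$ need not cut out $\nu(\pi_F(S))$ on its own, so I should phrase the argument in terms of the whole linear system $|\Ii_S(F)|$: either its base locus on $\nu(\pi_F(Y))$ contains $\nu(\pi_F(Z))$ — which by Definition~\ref{ee6} applied to $\pi_F(S)$ is impossible unless $\pi_F(Z)$ sits inside that base locus, forcing the span of $\nu(\pi_F(S))$ to contain $\nu(\pi_F(Z))$ and hence $\pi_F(Z)\subseteq\pi_F(S)$ — or some member $D$ of the system misses a point (or a tangent direction) of $Z$, which is the situation handled by the residuation contradiction above. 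The reduced/curvilinear hypothesis on $S$ enters exactly here, to guarantee that $D\cap Z$ and the residue behave as in \refthm{add6}, and that "set-theoretically quasi-general" suffices in the reduced case while the scheme-theoretic version is needed when $S$ is non-reduced.
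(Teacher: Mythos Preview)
Your strategy is exactly the paper's: the residuation argument of \refthm{add7} yields a contradiction whenever some $D\in|\Ii_S(F)|$ fails to contain $Z$, and quasi-generality of $\pi_F(Z)$ disposes of the remaining case. The paper's proof is three lines, after first citing \refthm{add7} to reduce to $\deg(S)=c$ and $h^0(\Ii_S(F))=M_F-c$.

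Two points of your execution need tightening. The ``excluded case'' is that every $D\in|\Ii_S(F)|$ contains $Z$, i.e.\ $H^0(\Ii_S(F))\subseteq H^0(\Ii_Z(F))$, equivalently $\spa{\nu(\pi_F(Z))}\subseteq\spa{\nu(\pi_F(S))}$ --- the reverse of the span inclusion you wrote. The dimension count $h^0(\Ii_S(F))\ge M_F-\deg(S)\ge M_F-c=h^0(\Ii_Z(F))$ then forces $H^0(\Ii_S(F))=H^0(\Ii_Z(F))$, hence $\deg(S)=c$, $\pi_F|_S$ is an embedding, and the two spans coincide; \emph{now} quasi-generality of $\pi_F(Z)$ (not of $\pi_F(S)$, as your third paragraph invokes) gives $\nu(\pi_F(S))\subseteq \nu(Y_F)\cap\spa{\nu(\pi_F(Z))}=\nu(\pi_F(Z))$, and equality follows by degree. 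Without this dimension step the bare inclusion $\pi_F(S)\subseteq\pi_F(Z)$ does not imply $\deg(S)=c$ or $\pi_F(S)=\pi_F(Z)$, so your final sentence in the second paragraph is a genuine gap. Finally, the appeal to curvilinearity and \refthm{add6} is a red herring: the residuation step here is verbatim that of \refthm{add7} and needs no further hypothesis on $S$.
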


\begin{proof}
By Theorem \ref{add7} it is sufficient to do the case $\deg (S)=c$
and $h^0(\Ii _S(F)) =h^0(\Oo _Y(F)) -b$. In particular $\pi _F$
induces an embedding of $S$ into
$Y_F==\PP^{b_1}\times\dots\times\PP^{b_k}$.

The proof of Theorem \ref{add7} works verbatim, if there is $H\in |\Oo _Y(F)|$ containing $S$, but not containing $Z$.
Since $h^0(\Ii _S(F)) =h^0(\Ii _Z(F))$, this is  equivalent to require that $H^0(\Ii _S(F)) \ne H^0(\Ii _Z(F))$,
i.e. that $\pi _F(S)$ is not contained in the base locus of $\pi _F(Z)$. Since $\pi _F(Z)$
is the set-theoretic (resp. scheme-theoretic) quasi-general, the base locus
of $|\Oo _{Y_F}(1)|$ is $\pi _F(Z)$. Thus we get $\pi _F(S) =\pi _F(Z)$.
\end{proof}

\begin{corollary}\label{add14cor}
For each $i\in \{1,\dots ,k\}$ fix a set $F_i\subset \{1,\dots ,k\}$ such that $i\in F_i$ and set
$E_i:= \{1,\dots ,k\}\setminus F_i$. Let {$S\subset Y$} be a finite set such that $\sharp (S) =r$, where:
$$r <\prod _{j\in F_i} (n_j+1)\quad\mbox{ and }\quad r\le \prod _{h\in E_i} (n_h+1).$$
Take any $T\in \spa{\nu(S)}$, such that $T\notin \spa{\nu(S')}$ for any $S'\subsetneq S$. Assume that:
$$h^0(\Ii _S(F_i)) =h^0(\Oo _Y(F_i))-r \quad\mbox{ and }\quad h^0(\Ii _S(E_i)) =h^0(\Oo _Y(E_i))-r$$
for all $i=\{1,\dots ,k\}$. Assume moreover that each $\pi _{F_i}(S)$ is set-theoretically
(resp. scheme-theoretical) quasi-general.

If $S'\ne S$ is a zero-dimensional subscheme of degree $\le r$ such that
 $\spa{\nu (S')}$ contains $T$,  then $S'$ is a finite set,
$\sharp (S')=r$ and $\pi _i(S') =\pi _i(S)$ for all $i$.
\end{corollary}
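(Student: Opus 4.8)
The plan is to derive Corollary~\ref{add14cor} as a direct application of Proposition~\ref{add14}, applied once for each index $i\in\{1,\dots,k\}$. First I would fix an index $i$ and verify that the hypotheses of Proposition~\ref{add14} are satisfied with the partition $E=E_i$, $F=F_i$ and with $c=r$. The inequality $r<\prod_{j\in F_i}(n_j+1)=M_{F_i}$ is exactly one of the numerical hypotheses, so $c<M_{F_i}$ holds. The condition $h^0(\Ii_S(F_i))=h^0(\Oo_Y(F_i))-r$ together with $r\le\prod_{h\in E_i}(n_h+1)$ forces $h^1(\Ii_S(F_i))=0$ and, similarly, $h^0(\Ii_S(E_i))=h^0(\Oo_Y(E_i))-r$ gives $h^1(\Ii_S(E_i))=0$; here one uses that $\sharp(S)=r$ and that the expected codimension of the span is attained precisely when there is no $h^1$. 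The quasi-generality of $\pi_{F_i}(S)$ is assumed outright. Thus $Z:=S$ satisfies all hypotheses of Proposition~\ref{add14}.

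Next, given a zero-dimensional scheme $S'\ne S$ with $\deg(S')\le r$ and $T\in\spa{\nu(S')}$, Proposition~\ref{add14} applied with this fixed $i$ tells us immediately that $\deg(S')=r$ and $\pi_{F_i}(S')=\pi_{F_i}(S)$. Since this holds for every $i$, and every single index $i$ lies in its chosen set $F_i$, the projections $\pi_i(S')$ for all $i$ are determined by comparing $\pi_{F_i}(S')=\pi_{F_i}(S)$ and then composing with the further projection to the $i$-th factor: $\pi_i(S')=\pi_i\circ\pi_{F_i}(S')=\pi_i\circ\pi_{F_i}(S)=\pi_i(S)$. This yields $\pi_i(S')=\pi_i(S)$ for all $i$, as claimed.

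It remains to argue that $S'$ is actually a finite set with $\sharp(S')=r$, rather than a genuinely non-reduced scheme of degree $r$. Here I would use that for a fixed $i$, Proposition~\ref{add14} gives not just equality of images but that $\pi_{F_i}$ restricted to $S'$ is an embedding (this is part of the conclusion $h^0(\Ii_{S'}(F_i))=h^0(\Oo_Y(F_i))-\deg(S')$, i.e.\ $\nu(\pi_{F_i}(S'))$ is linearly independent of the expected dimension, so $\pi_{F_i|S'}$ is an isomorphism onto its image). Since $\pi_{F_i}(S')=\pi_{F_i}(S)$ is a reduced set of $r$ points, $S'$ itself is reduced with $\sharp(S')=r$.

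The only genuinely delicate point is the translation between the cohomological hypotheses $h^0(\Ii_S(F_i))=h^0(\Oo_Y(F_i))-r$ and the vanishings $h^1(\Ii_S(F_i))=0$ and $h^1(\Ii_S(E_i))=0$ required by Proposition~\ref{add14}; this is where the numerical assumptions $r<\prod_{j\in F_i}(n_j+1)$ and $r\le\prod_{h\in E_i}(n_h+1)$ enter, guaranteeing that the Hilbert function of $S$ in the relevant line bundle takes the maximal value $r$, which is equivalent to $h^1=0$. Everything else is a formal iteration of Proposition~\ref{add14} over the $k$ choices of partition, so I expect no further obstacle.
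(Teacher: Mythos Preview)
Your proposal is correct and follows essentially the same route as the paper's proof: apply Proposition~\ref{add14} once for each $i$ with $Z=S$, $E=E_i$, $F=F_i$, and $c=r$, then use $i\in F_i$ to pass from $\pi_{F_i}(S')=\pi_{F_i}(S)$ to $\pi_i(S')=\pi_i(S)$; the reducedness of $S'$ comes, as you note, from the fact (implicit in the proof of Proposition~\ref{add14}) that $\pi_{F_i}|_{S'}$ is an embedding onto the reduced set $\pi_{F_i}(S)$. One small clarification: the equivalence between $h^0(\Ii_S(F_i))=h^0(\Oo_Y(F_i))-r$ and $h^1(\Ii_S(F_i))=0$ holds for any finite set $S$ of cardinality $r$ via the structure sequence of $S$, and does not itself require the numerical inequalities; those serve only to place $c=r$ in the range allowed by Proposition~\ref{add14}.
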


Notice that (unfortunately) we are not able to conclude that $S=S'$: they can be different
even if  $\pi _i(S') =\pi _i(S)$ for all $i$. Namely the points can differ by a rearrangement of the
coordinates.

\begin{proof}
Since $h^0(\Ii _S(F_i)) =h^0(\Oo _Y(F_i))-\sharp (S)$, each $\pi _{F_i|S}$ is injective.
Assume that $S'$ exists. By Proposition \ref{add14} applied to $F_i$, $S'$ is a finite set with
$\sharp (S')=r$, $\pi _{F_i|S'}$ injective and $\pi _{F_i}(S')=\pi _{F_i}(S)$. Thus
$\pi _i(S') =\pi _i(S)$ for all $i$.
\end{proof}

\begin{remark} {\refcor{add14cor} does not provide the identifiability of a tensor $T$: 
it simply bounds strictly the locus where different decompositions
of the same tensor $T$ could lie.  We observe that, on} the other hand, the numerical range of application of
 \refcor{add14cor} is wider than {the range of Kruskal's criterion of identifiability}.

 Just to give an example, consider tensors of type $3\times 3\times 6$,  corresponding (mod scalars) to points
 in the space $\PP^{53}$ which contains the Segre embedding of $\PP^2\times\PP^2\times\PP^5$.
 Kruskal's criterion for the identifiability applies only when the rank $r$ is bounded by $r\leq (3+3+6-2)/2=5$.
 Our \refcor{add14cor} applies,  taking $F_1=F_2=\{1,2\}$, $F_3=\{3\}$ and
 checking the geometric assumptions, even for $r=6$.
\end{remark}


\begin{thebibliography}{CGLM08}

\bibitem[AH95]{AH} Alexander~J. and A. Hirschowitz~A.,
\newblock{\em Polynomial interpolation in several variables},
\newblock{J. Algebraic Geom.}, 4 (1995), 201–222.

\bibitem[BB12]{bb} Ballico~E.  and Bernardi~A.,
\newblock {\em Decomposition of homogeneous polynomials with low rank},
\newblock {Math. Z.} 271 (2012), 1141--1149.

\bibitem[BGI11]{bgi} Bernardi~A., Gimigliano~A. and Id\`{a}~M.,
\newblock{\em Computing symmetric rank for symmetric tensors},
\newblock{ J. Symbolic. Comput.} { {46}} (2011), 34--55.

\bibitem[BGL13]{BGL} Buczy\' nski~J., Ginenski~A. and Landsberg~J.~M.,
\newblock{\em Determinantal equations for secant varieties and
the Eisenbud-Koh-Stillman conjecture},
\newblock{J. London Math. Soc.} 88 (2013), 1--24.

\bibitem[COV17]{COV3} Chiantini~L., Ottaviani G.  and Vannieuwenhoven~N.,
\newblock{\em Effective criteria for specific identifiability of tensors and forms},
\newblock{SIAM J. Matrix Anal. Applic. to appear},
\newblock{(arXiv:1609.00123)}.

\bibitem[CS16]{CS16} Chiantini~L. and Sacchi~D.,
\newblock {\em Segre functions in Multiprojective Spaces and Tensor Analysis},
\newblock {in: G. Casnati et al., \em From  Classical to Modern Algebraic Geometry,
Corrado Segre's Mastership and Legacy},
Springer - Trends in History and Science (2016), 361--374.

\bibitem[CS11]{cs} Comas~G, M. Seiguer~M.,
\newblock{\emph{On the rank of a binary form}},
\newblock{Found Comput Math} 11 (2011),  65--78.

\bibitem[CGLM08]{CGLM} Comon~P., Golub~G., Lim~L.~H., and Mourrain~B.,
\newblock{\em Symmetric tensors and symmetric tensor rank},
\newblock{SIAM J. Matrix Anal. Applic.}, 30 (2008), 1254--1279.


\bibitem[D13]{Der} Dersken~H.,
\newblock {\em Kruskal's uniqueness inequality is sharp},
\newblock {Lin. Alg. Applic.},
438 (2013), 708--712.

\bibitem[F16]{Fri} Friedland~S.,
\newblock{\em Remarks on the symmetric rank of symmetric tensors},
\newblock{SIAM J. Matrix Anal. Applic.}, 37 (2016), 320--337.

\bibitem[GKZ]{GKZ} Gelfand~I.~M., M.~Kapranov~M. and Zelevinsky~A,
\newblock{\bf Discriminants, Resultants, and Multidimensional Determinants.}
\newblock{ Birk\"auser, Zurich} (1994).

\bibitem[K77]{Kru}
Kruskal~J.~B.,
\newblock {\em Three-way arrays: rank and uniqueness of trilinear decompositions,
with applications to arithmetic complexity and statistics},
\newblock {Lin. Alg. Applic.} 18 (1977), 95--138.

\bibitem[SB00]{sb}
Sidiropoulos~N.~D. and Bro~R.,
\newblock {\em On the uniqueness of multilinear decomposition of N-way arrays},
\newblock {J. Chemometrics} 14 (2000), 229--239.

\bibitem[S73]{Stra} Strassen~V.,
\newblock {\em Vermeidung von Divisionen},
\newblock {J. Reine Angew. Math.} 264 (1973), 184-–202.


\bibitem[ZHQ16]{ZHQ} Zhang~X., Huang~Z.~H., and Qi~L.,
\newblock{\em Comon's conjecture, rank decomposition, and symmetric rank
decomposition of symmetric tensors},
\newblock{SIAM J. Matrix Anal. Applic.}, 37 (2016), 1719--1728.

\end{thebibliography}
\end{document}